\DeclareMathOperator*{\argmax}{arg\,max}
\DeclareMathOperator*{\argmin}{arg\,min}
\newtheorem{definition}{Definition}
\newtheorem{proposition}{Proposition}
\newtheorem{corollary}{Corollary}
\newtheorem{theorem}{Theorem}
\newtheorem{conjecture}{Conjecture}
\title{Characterization of Logarithmic Fekete Critical Configurations of at Most Six Points in All Dimensions}
\author[1]{Diego Armentano}
\author[2]{Leandro Bentancur}
\author[1,3]{Federico Carrasco}
\author[3]{\\Marcelo Fiori}
\author[4]{Matías Valdés\thanks{matias.valdes@noreste.udelar.edu.uy}}
\author[2]{Mauricio Velasco}
\affil[1]{\footnotesize Facultad de Ciencias Económicas y Administración, Universidad de la República, Montevideo, Uruguay}
\affil[2]{Facultad de Ciencias, Universidad de la República, Montevideo, Uruguay}
\affil[3]{Facultad de Ingeniería, Universidad de la República, Montevideo, Uruguay}
\affil[4]{Centro Universitario Regional Noreste, Universidad de la República, Tacuaremb\'o, Uruguay}
\date{}
\providecommand{\keywords}[1]{\textit{Keywords:} #1} 
\begin{document}

\maketitle

\begin{abstract}
	We consider the logarithmic Fekete problem, which consists of placing a fixed number of points on the unit sphere in $\mathbb{R}^d$, in such a way that the product of all pairs of mutual Euclidean distances is maximized or, equivalently, so that their logarithmic energy is minimized.
	Using tools from Computational Algebraic Geometry, we find and classify all critical configurations for this problem when considering at most six points in every dimension $d$.
    In particular, our approach gives new proofs of several key results appearing in the literature, with the benefit of using a unified approach.
    {Furthermore, for seven points in $S^2$, we characterize the global minimizer among critical configurations having at least one pair of antipodal points, and {give numerical evidence to support the} conjecture that this configuration is also the unrestricted global minimizer.}
\end{abstract}

\keywords{Fekete, logarithmic energy, critical points, Gr{\"o}bner basis, msolve.}


\section{Introduction} \label{sec:intro}
	
	Consider the problem of placing $n$ different points on the unit sphere $S^{d-1} \subset \mathbb{R}^d$, in such a way that the product of their mutual Euclidean distances is maximized:
	\begin{equation} \label{eq:fekete_prod}
		\argmax_{w=(w_1,\dots, w_n) \in (S^{d-1})^n } \prod_{i=1}^{n} \prod_{j=i+1}^{n} \| w_i - w_j \|^2 .
	\end{equation}
    Taking logarithm in the objective function, we obtain an equivalent problem, known as the logarithmic Fekete problem:
	\begin{equation} \label{eq:fekete_log}
		\argmin_{ w \in (S^{d-1})^n } - \sum_{i=1}^{n} \sum_{j=i+1}^{n} \log \left( \|w_i - w_j \|^2 \right) .
	\end{equation}

    This is considered a highly non-trivial optimization problem, with exact solutions known only for a few values of $n$.
    Indeed, Smale’s 7th problem, listed among the key open problems for the 21st century by Steve Smale, asks whether it is possible to find $n$ points on the sphere $S^2$ in polynomial time in $n$, so that its logarithmic energy differs from the optimal value by at most $c \ln n$, for a universal constant $c$ \cite{smale_problems}. A key difficulty of this problem is that the optimal value remains unknown to logarithmic precision \cite{betermin2018renormalized,carlos_fatima}.
    
    There are several lines of research related to this problem. Among them, we highlight two. 
    One focuses on constructing random or deterministic configurations with ``good'' energy values for an arbitrary number of points, usually large or asymptotic \cite{ABS,alishahi2015spherical,beltran2018diamond}. The other aims to characterize optimal configurations for specific values of $n$, typically small ones. Our work falls within this latter setting.
    
    While the solution to this problem is known only for a few values of $n$, namely, $n \leq 6$ and $n = 12$, even less is understood about the critical configurations.
    For example, even for some values of $n$ where the solution is known, {the complete set of critical configurations is unknown.}
    
	In this work, {using a unified methodology,} we find and classify all the critical configurations of the logarithmic Fekete problem, for the case of at most six points, and for spheres of all possible dimensions. {In this setting, we recover the previously reported solutions, and we show that there are no spurious local minima in $S^2$.}
    
	Our strategy for determining all critical configurations of Problem~\eqref{eq:fekete_log} proceeds as follows.
	First we define a system of polynomial equations, associated to the critical configurations.
	Then we count the number of complex solutions of the polynomial system, by computing a Gr{\"o}bner basis for its ideal. We call this quantity the ``expected number of solutions''. Observe that regardless of whether the ideal is radical, this is an upper bound on the number of real solutions of the problem.
	Simultaneously, we find as many solutions of the polynomial system as we can, and compare the number of found solutions with the upper bound. The proof of the Theorem is achieved because we are able to find enough solutions so as to exactly match the upper bound, guaranteeing that our list of critical configurations is exhaustive.
	The construction of solutions is done using different approaches. In the case of four and five points, we are able to find all solutions by considering natural symmetric configurations.  For six points, imagination is not enough, and we use Gr{\"o}bner bases to find additional solutions. 
    The only problem with the above strategy is that the locus of critical points in $(S^{d-1})^n$ does not form a zero-dimensional variety because it is invariant under the action of the orthogonal group. In order to recover finiteness, we reformulate the problem modulo the orthogonal group action. Doing so recovers finiteness and leads to a simpler formulation, which allows us to carry out the desired classification.
    The source code for reproducing our results is available at: \url{www.github.com/matiasvd}.

\section{Related work} \label{sec:related_work}

	For $S^1$, the solution of Problem \eqref{eq:fekete_log} is known for any number of points, and is given by $n$ equidistributed points \cite[Theorem 2.3.3]{borodachov2019}.
    We denote this configuration as $n$-gon or ``Equator''.
	For $S^2$, the solutions are known only for up to six points \cite{andreev1996, kolushov1997, dragnev2002}, and for twelve points \cite{andreev1996}.
    {For $S^3 \subset \mathbb{R}^4$ and six points, the solution is given by two equilateral triangles, inscribed in two great circles $S^1$, orthogonal to each other \cite[Theorem 1.9]{Dragnev2016}. We denote this configuration as $3_{S^1} \times 3_{S^1}$.}
	Table~\ref{tab:known_solutions} lists the known solutions for up to six points, for all possible dimensions of the sphere.
	We use the term $(n-1)$-simplex for the configuration that has $n$ points on the sphere {$S^{d-1}$}, all at the same distance {from each other. The $(n-1)$-simplex is the optimal configuration for $n$ points on $S^{d-1}$, whenever $d+1 \geq n \geq 3$ \cite{kolushov1997}.} Note that the 3-simplex is the regular Tetrahedron.
    
	\begin{table}[H]
        \centering
		\caption{Known optimal configurations of the logarithmic Fekete problem for at most six points.}
		\label{tab:known_solutions}
		\begin{tabular}{ccccc}
			\toprule
			$n$ & $S^1$ & $S^2$ & $S^3$ & $S^4$ \\
			\midrule
			3 & 2-simplex & N/A & N/A & N/A \\
			4 & 4-gon & Tetrahedron \cite{kolushov1997} & N/A & N/A \\
			5 & 5-gon & {Bipyramid} \cite{dragnev2002} & 4-simplex \cite{kolushov1997} & N/A \\
			6 & 6-gon & Octahedron \cite{kolushov1997} & {$3_{S^1} \times 3_{S^1}$ \cite{Dragnev2016} } & 5-simplex \cite{kolushov1997} \\
			\bottomrule
		\end{tabular}
	\end{table}

    Regarding critical configurations of Problem \eqref{eq:fekete_log}, little is known about them, even for the cases where the solutions are known.
    One thing that is known is that the problem has no local maxima \cite[Corollary 1.3]{beltran2013}, and that critical configurations always have center of mass zero \cite[Proposition 2]{dragnev2002}.
    It is also known that the global minima is not always unique. In particular, if $q=p^l$, with $p>2$ prime and $l \geq 1$, taking $n=(q+1)(q^3+1)$ points on $S^{d-1}$, $d=q \frac{q^3+1}{q+1}$, there are $\lfloor (l-1)/2 \rfloor$ essentially different global minima \cite[Section 1]{ballinger2009}.
    {On the other hand, in \cite{Dragnev2016, Dragnev2023}, the authors study the critical configurations for $d+2$ points on $S^{d-1}$, and give a method to classify all critical configurations, and to characterize the non-degenerate ones. Moreover, for six points in $S^3$, they find the global minimum and give the first known case of a spurious local minima.}
    {Also, the only saddle points that are explicitly mentioned in the literature are the Equator \cite[Section 2]{shub1993}, and two configurations for six points in $S^3$ \cite{Dragnev2016, Dragnev2023}. }
    Another natural question is whether every saddle may be classified using the Hessian. This is particularly important for numerical optimization \cite{ge2015, jin2017}. For 7 points on $S^2$, there is a critical configuration that the Hessian does not classify \cite[Remark 6.3]{constantineau2023}, although it is believed that it is a global minimum \cite{beltran2020}[Conjecture 11.1].
    Finally, based on numerical experiments, it is conjectured that the number of spurious local minima {in $S^2$} increases ``dramatically'' with the number of points \cite[Section 4]{rakhmanov1995}, although no spurious local minima are known in this dimension.\\

\noindent \textbf{Contributions}\\

    {We introduce an approach that transforms the real optimization problem of Fekete points into a novel problem amenable to the methods of Computational Algebraic Geometry (over the complex numbers). In principle, this methodology allows us to find and classify all critical configurations of $n$ points in $S^{d-1}$, without assuming any relation between $n$ and $d$, modulo sufficient computational resources. We apply this method successfully for up to six points in all dimensions, recovering previously known results in a unified framework.}
    
    {Although the optimal configuration for six points in $S^2$ was already reported, our method obtains the first exhaustive list of all critical configurations and their classification.}    
    {For six points in $S^3$, since our method obtains all critical configurations, including the degenerate ones, this complements the results of \cite{Dragnev2016, Dragnev2023}, giving an exhaustive list of all critical configurations in this case.}

    {Finally, for seven points in $S^2$, we prove that if a solution to the Fekete problem has a dipole, then it is given by {configuration 1:5:1, which is} the solution conjectured via numerical experiments.}
    {If 1:5:1 is indeed the global minimum, our theorem reduces the problem of proving that it is optimal to verifying that some minimizer of the Fekete problem must have a dipole. We also provide numerical evidence in favor of the optimality of 1:5:1.}
    
\section{Formulation and computational algebraic tools}

\subsection{System of critical configurations} \label{sec:critical_confs}

	The Lagrangian of Problem \eqref{eq:fekete_log} is:
	$$ L(w, \lambda) := - \sum_{i=1}^{n} \sum_{j=i+1}^{n} \log \left( \|w_i - w_j \|^2 \right) + \sum_{i=1}^{n} \lambda_i \left( \| w_i \|^2 - 1 \right) .$$
	A critical configuration is a sequence $(w_1,\ldots,w_n)$ on the product of spheres, for which there exist multipliers $( \lambda_1,\ldots,\lambda_n )$, such that the Lagrangian has null gradient with respect to $w$:
	$$ \frac{\partial L(w,\lambda)}{\partial w_k} = - \sum_{j=1, j \neq k}^{n} \frac{ 2 \left( w_k - w_j \right) }{ \| w_k - w_j \|^2 } + 2 \lambda_k w_k = \vec{0} , \quad \forall \ k=1,\ldots,n .$$
	Taking dot product with $w_k$, and using that $\|w_k - w_j \|^2 = 2 - 2 w_k^T w_j$ on the sphere, we obtain the values of the multipliers, which happen to be all equal and constant:
    $$ \lambda_k = \frac{n-1}{2} , \quad \forall \ k=1,\ldots,n .$$
	Thus, the critical configurations are those points on the sphere
	\begin{equation} \label{eq:shpere}
		\|w_k\|^2 = 1, \quad \forall \ k=1,\ldots,n ,
	\end{equation}
	which also satisfy the null gradient condition:
	\begin{equation} \label{eq:lagrange_gradient_null}
		\frac{n-1}{2} w_k - \sum_{j=1, j \neq k}^{n} \frac{ w_k - w_j }{ \| w_k - w_j \|^2 } = \vec{0}, \quad \forall \ k=1,\ldots,n .\\
	\end{equation}
    
    An interesting property of the critical configurations of Problem \eqref{eq:fekete_log}, is that their center of mass must be null:
	\begin{equation} \label{eq:center_mass_null}
		\sum_{k=1}^{n} w_k = \vec{0} .
	\end{equation}    
	This is obtained by summing the equations \eqref{eq:lagrange_gradient_null} and noting that:
	$$ \sum_{k=1}^{n} \sum_{j=1, j \neq k}^{n} \frac{ w_k - w_j }{ \| w_k - w_j \|^2 } = \vec{0} ,$$
    {because for every term $w_k-w_j$, the summation includes a corresponding $w_j-w_k$ term.}
    
    Searching for critical configurations of the Fekete problem means finding solutions to the system of polynomial equations defined by \eqref{eq:shpere} and \eqref{eq:lagrange_gradient_null}, to which we will add \eqref{eq:center_mass_null}, which follows from the previous ones and does not modify the set of solutions.
    
\subsection{Removing orthogonal symmetry} \label{sec:removing_orthogonal_symmetry}

    An important property of the system of equations defined in the preceding section is that the system is invariant under isometries of the sphere.
    More precisely, if $Q \in \mathbb{R}^{d \times d}$ is an orthogonal matrix, and $(w_1, \ldots, w_n)$ is a critical configuration, then $(Q w_1, \ldots, Q w_n)$ is also a critical configuration.
    Thus, given a critical configuration, we obtain an infinite number of them by applying rotations.
    As our method relies on counting solutions, we need a system with a finite number of solutions.
	For this, we consider another system of equations, where variables are the dot products of pairs of points: $$ x_{ij} := w_i^T w_j, \quad i \neq j .$$
	Note that these variables represent the cosine of the angles between pairs of points.
	To obtain the new system of equations, we first take dot product in equation \eqref{eq:lagrange_gradient_null}, with respect to each point $w_i$:
	$$ \sum_{j=1, j \neq k}^{n} \frac{ w_i^T w_k - w_i^T w_j }{ \| w_k - w_j \|^2 } = \frac{n-1}{2} w_i^T w_k, \quad \forall \ k,i=1,\ldots,n .$$
    As $w_l^T w_l = 1, \forall \ l=1,\ldots,n$, we have: $\| w_k - w_j \|^2 = 2 (1-w_k^T w_j)$. Using this, the equations may be written in the new variables as:
    \begin{equation} \label{eq:lagrange_gradient_null_xij}
		\sum_{j=1, j \neq k}^{n} \frac{ x_{ik} - x_{ij} }{ 1 - x_{kj} } = (n-1) x_{ik}, \quad \forall \ k \neq i .
	\end{equation}
    Note that for $k=i$ the equation is trivial. We then consider Equation \eqref{eq:center_mass_null}, were we also take dot product with respect to each point $w_j$:
	$$ \sum_{k=1}^{n} w_j^T w_k = 0, \quad \forall \ j=1,\ldots,n .$$	
	Using $w_j^T w_j = 1$, the equations in the new variables are:
	\begin{equation} \label{eq:center_mass_null_xij}
		1 + \sum_{k=1, k \neq j}^{n} x_{jk} = 0, \quad \forall \ j=1,\ldots,n .	
	\end{equation}
	Equations \eqref{eq:lagrange_gradient_null_xij} and \eqref{eq:center_mass_null_xij} determine the new system of equations in the variables $x_{ij}$. The condition $w_i \in S^{d-1}$ of equation \eqref{eq:shpere} is now implicit, and we will not define variables $x_{ii}$.

\subsection{Relation between solutions of both systems} \label{Sec. Relation between solutions of both systems}
    
    Any solution in the variables $w_i$ has an associated solution $x_{ij} = w_i^T w_j$. The reciprocal is also true, provided that we consider complex solutions. That is: given a solution $x_{ij} \in \mathbb{C}$, there is a solution $w_i \in \mathbb{C}^d$, with $x_{ij}=w_i^T w_j$. To see this, we first define two matrices.
    
    \begin{definition}[Dot product matrix] \label{def:dot_product_matrix}
        Let $x_{ij} \in \mathbb{C}, \ 1 \leq i < j \leq n$. Define $X \in \mathbb{C}^{n \times n}$, symmetric, with $X_{ij} = x_{ij}, \ \forall \ i < j$, $X_{ii}=1, \ \forall \ i$.
    \end{definition}
    
    \begin{definition}[Cartesian coordinates matrix]
        Given $n$ vectors $w_i \in \mathbb{C}^d$, define $W \in \mathbb{C}^{d \times n}$, with $w_i$ as column $i$.
    \end{definition}
    
    We now use a corollary of the following well-known result.
    
    \begin{proposition}[Autonne-Takagi factorization {\cite[Corollary 2.6.6]{horn2013}}]
        If $X \in \mathbb{C}^{n \times n}$ is symmetric, there is a unitary $P \in \mathbb{C}^{n \times n}$, and a non-negative diagonal matrix $D \in \mathbb{R}^{n \times n}$, such that: $X = P^T D P$.
        Furthermore, the entries of $D$ are the singular values of $X$.
    \end{proposition}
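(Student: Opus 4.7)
The plan is to prove the proposition by induction on $n$, at each step peeling off one diagonal entry together with a carefully chosen ``Takagi vector'' in the largest-eigenvalue eigenspace of $X^*X$. The base case $n=1$ is immediate: writing the scalar entry $X_{11} = \sigma e^{i\theta}$ with $\sigma \ge 0$ and taking $P = [e^{i\theta/2}]$, $D = [\sigma]$, one checks $P^T D P = X$.

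For the inductive step, I would first establish the following reduction: there exists a unit vector $v_1 \in \mathbb{C}^n$ and $\sigma_1 \ge 0$ with $X v_1 = \sigma_1 \bar v_1$. Granting this, I would extend $v_1$ to an orthonormal basis and assemble the vectors as the columns of a unitary $P_1$ with first column $v_1$. Using $X^T = X$, $X v_1 = \sigma_1 \bar v_1$, and the identity $v_1^T \bar v_1 = v_1^* v_1 = 1$, a short calculation shows
\begin{equation*}
P_1^T X P_1 = \begin{pmatrix} \sigma_1 & 0 \\ 0 & X' \end{pmatrix},
\end{equation*}
with $X' \in \mathbb{C}^{(n-1) \times (n-1)}$ symmetric. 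Applying the inductive hypothesis to $X'$ and splicing the resulting factorization into a single unitary $P$ of size $n$ yields $X = P^T D P$. The ``singular values'' addendum then follows immediately from $X^* X = P^* D^2 P$, using the fact that $\bar P P^T = I$ for unitary $P$.

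The hard part, and the step I would expect to be the real obstacle, is producing the Takagi vector $v_1$: a plain complex-linear eigenvector of $X^* X$ need not satisfy $X v_1 = \sigma_1 \bar v_1$, so an additional ``reality'' argument is required. Since $X$ is symmetric, $X^* = \bar X$ and so $\bar X X = X^* X$ is Hermitian positive semidefinite; let $\sigma_1^2$ be its largest eigenvalue (if $\sigma_1 = 0$ then $X = 0$ and the result is trivial) and $E \subset \mathbb{C}^n$ the associated eigenspace. I would then introduce the antilinear map $T : E \to E$ defined by $T(w) := \overline{Xw}/\sigma_1$; a direct computation using $\bar X X w = \sigma_1^2 w$ shows $\bar X X \overline{Xw} = \sigma_1^2 \overline{Xw}$, so $T$ really does stabilize $E$, and $T^2 = \mathrm{Id}_E$. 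Consequently $E$ decomposes over $\mathbb{R}$ as $E^+ \oplus iE^+$, where $E^+$ is the real fixed set of $T$; any unit vector $v_1 \in E^+$ satisfies $X v_1 = \sigma_1 \bar v_1$ by construction, completing the induction. The subtlety lies precisely in recognizing that the correct eigenspace structure here is \emph{real} rather than complex.
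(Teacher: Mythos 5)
Your proof is correct, but there is nothing in the paper to compare it against: the paper does not prove this proposition at all, it simply quotes it as a known result (Autonne--Takagi factorization) with a citation to Horn and Johnson, and only proves the subsequent corollary extracting $W$ with $X = W^T W$. On its own merits, your argument is sound and essentially the classical inductive proof of Takagi's theorem. The key points all check out: for $w$ in the top eigenspace $E$ of $\bar X X = X^* X$ one has $\bar X X\,\overline{Xw} = \bar X\bigl(X\bar X\bar w\bigr) = \sigma_1^2\,\overline{Xw}$, so $T$ does stabilize $E$; $T^2 w = \overline{X\overline{Xw}}/\sigma_1^2 = \bar X X w/\sigma_1^2 = w$; and since $T$ is an antilinear involution, its real fixed set $E^+$ satisfies $E = E^+ \oplus iE^+$ (if $Tw = -w$ then $iw \in E^+$), so $E^+ \neq 0$ and a unit Takagi vector exists. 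The deflation step is also fine: with $P_1$ unitary having first column $v_1$, the first column of $P_1^T X P_1$ is $\sigma_1 e_1$ because $v_i^T \bar v_1 = \overline{v_i^* v_1} = \delta_{i1}$, and symmetry of $P_1^T X P_1$ kills the first row; splicing via $P = \operatorname{diag}(1,Q)\,P_1^{*}$ gives $X = P^T D P$, and $X^*X = P^* D^2 P$ identifies the entries of $D$ as the singular values. If anything, your route (deflation plus a real structure on the top singular eigenspace) is more self-contained than the reference's, which obtains the factorization from the singular value decomposition and properties of $A\bar A$; the only step worth writing out explicitly in a final version is the splicing identity above and the trivial case $\sigma_1 = 0 \Rightarrow X = 0$.
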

    
    \begin{corollary} \label{cor:complex_solution}
        If $X \in \mathbb{C}^{n \times n}$ is symmetric with rank $d$, and ones on its diagonal, there exists $W \in \mathbb{C}^{d \times n}$, such that: $X=W^T W$, and $w_i^T w_i = 1, \ \forall \ i$.
    \end{corollary}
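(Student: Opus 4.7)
The plan is to read off $W$ directly from the Autonne-Takagi factorization, keeping only the coordinates that correspond to nonzero singular values. This will automatically give a $d$-row matrix, and the diagonal constraint on $X$ will translate into the unit-norm constraint on the columns.

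More concretely, I would proceed as follows. First, apply the Autonne-Takagi factorization to the symmetric matrix $X$ to obtain $X = P^T D P$ with $P \in \mathbb{C}^{n \times n}$ unitary and $D \in \mathbb{R}^{n \times n}$ a nonnegative diagonal matrix whose entries are the singular values of $X$. Since the rank of $X$ equals $d$ and $P$ is invertible, exactly $d$ of the singular values are strictly positive; by permuting the rows of $P$ (and the corresponding entries of $D$) I may assume the first $d$ diagonal entries of $D$ are positive and the remaining $n-d$ are zero.

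Next, since $D$ is real and nonnegative, I can form the real nonnegative diagonal matrix $\sqrt{D}$ entrywise, and set $S := \sqrt{D}\, P \in \mathbb{C}^{n \times n}$. Then
\[
S^T S = P^T \sqrt{D}^{\,T} \sqrt{D}\, P = P^T D P = X,
\]
and the last $n-d$ rows of $S$ are identically zero because the corresponding diagonal entries of $\sqrt{D}$ vanish. Let $W \in \mathbb{C}^{d \times n}$ be the matrix formed by the top $d$ rows of $S$; the vanishing of the other rows gives $X = W^T W$. Finally, reading off the diagonal of $X$ yields $w_i^T w_i = X_{ii} = 1$ for every $i$, as required.

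There is essentially no obstacle to this argument: the only mildly delicate point is justifying that $D$ has exactly $d$ nonzero entries, which follows because unitary matrices preserve rank, so $\operatorname{rank}(X) = \operatorname{rank}(D)$. Everything else is a direct computation built on top of the proposition quoted just above the corollary.
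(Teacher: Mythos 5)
Your argument is correct and is essentially the paper's own proof: both use the Autonne--Takagi factorization $X = P^T D P$ and take $W$ to be $\sqrt{D}\,P$ restricted to the $d$ rows corresponding to the positive singular values, with the unit-norm condition read off from the diagonal of $X$. Your version merely spells out the permutation and the rank-preservation detail that the paper leaves implicit.
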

    
    \begin{proof}
        As $X$ is symmetric: $X = P^T D P$. Take: $W = \sqrt{ \hat{D} } P$; where $\hat{D} \in \mathbb{R}^{d \times n}$ is the submatrix of $D$ with the positive singular values.
    \end{proof}

    We call $X$ the dot product matrix, as we may write $X = W^T W$.
    Thus, given a solution $X \in \mathbb{C}^{n \times n}$ of rank $d$, we obtain a configuration $(w_1, ..., w_n)$ given by the columns of $W \in \mathbb{C}^{d \times n}$. As the relation between both equation systems is obtained by taking dot product with $w_i$, $i=1,...,n$, and this set is a generator of $\mathbb{C}^d$, we get that both systems are equivalent.
    Thus, we do not lose or introduce solutions in $\mathbb{C}$.
    The next result characterizes the correspondence between real solutions of both systems, which is the ones we are really interested in.
    
    \begin{proposition}[{\cite[Corollary 2.5.11]{horn2013}}] \label{prop:x_es_dot_prod}
    	A matrix $X \in \mathbb{R}^{n \times n}$ is symmetric and positive semi-definite if and only if there exists $W \in \mathbb{R}^{d \times n}$, such that $X=W^T W$, for some $d \in \mathbb{N}$.
    	In this case, if $X = Q^T D Q$ is an orthogonal eigenvalue factorization of $X$, we can take $W := \sqrt{D} Q$.
    \end{proposition}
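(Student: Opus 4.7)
The plan is to prove both implications by invoking the spectral theorem for real symmetric matrices, and then verify that the explicit choice $W := \sqrt{D}Q$ does the job.

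For the easy direction, I would assume $X = W^T W$ for some $W \in \mathbb{R}^{d \times n}$ and check the two required properties directly. Symmetry follows from $(W^T W)^T = W^T W$. Positive semi-definiteness follows because, for any $v \in \mathbb{R}^n$, one has $v^T X v = v^T W^T W v = \|Wv\|^2 \geq 0$. This step is immediate and does not require any machinery beyond the definitions.

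For the harder direction, I would start from $X$ symmetric and positive semi-definite and apply the real spectral theorem to obtain an orthogonal $Q \in \mathbb{R}^{n \times n}$ and a diagonal $D \in \mathbb{R}^{n \times n}$ with $X = Q^T D Q$. The positive semi-definiteness hypothesis ensures that every diagonal entry of $D$ is non-negative, since the diagonal entries are the eigenvalues of $X$ and eigenvalues of a PSD matrix are non-negative. This is the one nontrivial input; without it, $\sqrt{D}$ would not exist as a real matrix. Once this is established, I define $\sqrt{D}$ to be the diagonal matrix whose entries are the non-negative square roots of the entries of $D$, and set $W := \sqrt{D}\, Q \in \mathbb{R}^{n \times n}$. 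A direct computation gives
$$ W^T W \;=\; Q^T \sqrt{D}^{\,T} \sqrt{D}\, Q \;=\; Q^T D\, Q \;=\; X,$$
which exhibits the desired factorization. To obtain the sharper statement with $W \in \mathbb{R}^{d \times n}$ for $d = \operatorname{rank}(X)$, I would drop the rows of $W$ corresponding to zero eigenvalues of $X$, since those rows are identically zero and contribute nothing to $W^T W$.

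There is no real obstacle here: the entire argument reduces to the spectral theorem together with the observation that PSD is equivalent to non-negativity of eigenvalues. The only subtlety worth flagging is that the statement allows $d$ to be any dimension that accommodates a factorization, and the choice $W := \sqrt{D}\,Q$ naturally produces $d = n$; the rank-$d$ refinement is a cosmetic truncation of the zero rows. Since this proposition is quoted from \cite{horn2013}, I would likely state it and simply note that it follows from the spectral theorem as sketched above.
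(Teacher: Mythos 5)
Your proof is correct: both directions are handled properly, the non-negativity of eigenvalues is the right key input, and the truncation of zero rows legitimately yields $W \in \mathbb{R}^{d \times n}$ with $d = \operatorname{rank}(X)$. The paper states this proposition without proof, citing \cite{horn2013}, and your spectral-theorem argument is exactly the standard one already implicit in the statement's formula $W := \sqrt{D}\,Q$, so there is nothing to reconcile.
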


    Note that when the previous result applies, $W$ has columns with unit 2-norm, if and only if $X=W^T W$ has ones on its diagonal.

\subsection{System of polynomial equations} \label{sec:system_dot_product}

	To apply tools from Algebraic Geometry, we need to express the new equations as a system of polynomial equations. The condition that points are pairwise distinct is: $x_{kj} \neq 1, \ \forall \ k \neq j$. This may be written introducing auxiliary variables $z_{kj}$, such that:
    \begin{equation} \label{eq:definition_zij}
		z_{kj} \left( 1 - x_{kj} \right) = 1, \quad \forall \ k \neq j .
	\end{equation}
	Using these variables, the equations in \eqref{eq:lagrange_gradient_null_xij} can be rewritten as polynomial equations:
	\begin{equation} \label{eq:lagrange_gradient_null_xij_pol}
		\sum_{j=1, j \neq k}^{n} \left( x_{ik} - x_{ij} \right) z_{kj} = (n-1) x_{ik}, \quad \forall \ k \neq i .
	\end{equation}
	Note that $z_{ij}=z_{ji}$, for all $i \neq j$.
	From now on, we will work with the overdetermined system of polynomial equations given by \eqref{eq:center_mass_null_xij} (null center of mass), \eqref{eq:definition_zij} (auxiliary variables), and \eqref{eq:lagrange_gradient_null_xij_pol} (null gradient of the Lagrangian); expressed in the variables $x_{ij}$ and $z_{ij}$, for $i < j$.
    This system has $m := 2 \binom{n}{2}$ variables, and $n + \binom{n}{2} + n(n-1)$ equations. Table \ref{tab:variables_equations_number} shows these numbers for some values of $n$.
	
	\begin{table}[H]
    \centering
	\caption{Number of variables and equations of the polynomial system given by Equations \eqref{eq:center_mass_null_xij}, \eqref{eq:definition_zij} and \eqref{eq:lagrange_gradient_null_xij_pol}, with $x_{ij}=x_{ji}$.}
	\label{tab:variables_equations_number}
	\begin{tabular}{cccccccc}
		\toprule
		 & $n$ & 3 & 4 & 5 & 6 & 7 & 8 \\
		\midrule
		variables & $2 \binom{n}{2}$ & 6 & 12 & 20 & 30 & 42 & 56 \\
		equations & $n(3n-1)/2$ & 12 & 22 & 35 & 51 & 70 & 92 \\
		\bottomrule
	\end{tabular}
	\end{table}

    {Note that this formulation gives the critical configurations for all sphere dimensions, and not just for $S^2$. However, it is possible to add polynomial constraints to select critical configurations only up to a given sphere dimension $S^{k-1} \subset \mathbb{R}^k$. For example, by adding all $(k+1) \times (k+1)$ minors of the $n \times n$ dot product matrix $X$, which implies $\text{rank}(X) \leq k$. This adds ${n \choose k+1} \times {n \choose k+1}$ equations to the polynomial system, each of degree $k+1$. Table \ref{tab:additional_constraints_rank} shows the number of equations that should be added for different values of $n$ and $k \leq n-1$. We will not include these constraints, since we are interested in characterizing the critical configurations in all dimensions.}
    
    \begin{table}[H]
        \centering
		\caption{Number of additional constraints to impose $\text{rank}(X) \leq k$ using all $(k+1) \times (k+1)$ minors of $X$.}
		\label{tab:additional_constraints_rank}
		\begin{tabular}{ccccc}
			\toprule
            & \multicolumn{4}{c}{$k \ / \ \text{rank}(X) \leq k$} \\
			$n$ & 2 & 3 & 4 & 5 \\
			\midrule
			5 & 25 & 1 &  & \\
			6 & 225 & 36 & 1 & \\
			7 & 1225 & 441 & 49 & 1 \\
			\bottomrule
		\end{tabular}
	\end{table}

\subsection{Counting complex solutions} \label{sec:counting_solutions}

	To apply our method, we need to verify that the system of polynomial equations \eqref{eq:center_mass_null_xij}, \eqref{eq:definition_zij} and \eqref{eq:lagrange_gradient_null_xij_pol} has a finite number of solutions, and to count this number.
    We first define the ideal $I \subset \mathbb{Q}[z_{ij},x_{ij}]$, generated by the equations.
    The set of complex solutions of these equations is denoted by $V(I) \subset \mathbb{C}^m$, where $m$ is the number of variables. 
    We then use the following result, which allows us to compute an upper bound on the true number of critical points via Gr\"obner bases.
    
	\begin{theorem}[{\cite[Finiteness Th. p. 39, Corollary 2.5 Ch. 4]{cox2005}}] \label{th:finitness}
		Let $I \subset \mathbb{Q}[x_1,\ldots,x_m]$ be an ideal, and $G$ a Gr{\"o}bner basis of $I$.
		The set of complex solutions $V(I) \subset \mathbb{C}^m$ is finite, if and only if, for every variable $x_i$, there exists an integer $\alpha_i \geq 0$, such that $x_i^{\alpha_i} \in LM(G)$; where $LM(G)$ denotes the set of leading monomials of the elements of $G$.
		In this case, the number of complex solutions, counted with multiplicity, is the number of monomials of the ring that are not in the ideal $\left\langle LM(G) \right\rangle$ (not divisible by any element of $LM(G)$).
	\end{theorem}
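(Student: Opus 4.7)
The plan is to prove both assertions by constructing a canonical $\mathbb{C}$-vector space basis for the quotient $R/I$ and then relating its dimension to the geometry of $V(I)$. The central tool is that, because $G$ is a Gr\"obner basis, the ideal of leading terms satisfies $\langle LT(I) \rangle = \langle LT(G) \rangle$. Let $B$ denote the set of monomials of $R$ that lie outside $\langle LT(G) \rangle$ (the ``standard monomials''). First I would apply the multivariate division algorithm against $G$: for each $f \in R$ this produces a canonical normal form $\overline{f}^{G}$, a finite $\mathbb{C}$-linear combination of elements of $B$ representing $f+I$. Linear independence of $B$ modulo $I$ is immediate, since any non-trivial relation would give an element of $I$ whose leading monomial sits in $B$, contradicting $B \cap \langle LT(I)\rangle = \emptyset$. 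Thus $B$ descends to a $\mathbb{C}$-basis of $R/I$.

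With this basis in hand, the finiteness equivalence becomes a purely combinatorial statement. By the classical Finiteness Theorem (whose proof rests on the Nullstellensatz), $V(I)$ is finite if and only if $\dim_{\mathbb{C}}(R/I) < \infty$. It therefore suffices to show that $|B| < \infty$ if and only if for every variable $x_i$ some power $x_i^{\alpha_i}$ lies in $\langle LT(G) \rangle$. If each such power exists, every element of $B$ must have $x_i$-exponent strictly less than $\alpha_i$ in each coordinate, bounding $|B| \leq \prod_i \alpha_i$. Conversely, if no power of some $x_i$ lies in $\langle LT(G) \rangle$, then the infinite set $\{1, x_i, x_i^2, \ldots\}$ is contained in $B$, so $|B| = \infty$.

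For the counting formula I would invoke the structure theorem for zero-dimensional ideals: when $V(I)$ is finite, the natural map
\[
R/I \longrightarrow \prod_{p \in V(I)} R_{\mathfrak{m}_p} / I\, R_{\mathfrak{m}_p}
\]
is an isomorphism of $\mathbb{C}$-algebras. Taking $\mathbb{C}$-dimensions yields $\dim_{\mathbb{C}}(R/I) = \sum_{p \in V(I)} \mu_p$, where $\mu_p := \dim_{\mathbb{C}}\bigl( R_{\mathfrak{m}_p}/I\, R_{\mathfrak{m}_p} \bigr)$ is by definition the multiplicity of $p$ as a solution. Combining with the basis $B$ obtained above gives $\sum_p \mu_p = |B|$, exactly the number of standard monomials. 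I expect this last decomposition to be the main technical obstacle: its proof requires either primary decomposition in a Noetherian ring or the construction of orthogonal idempotents via the Chinese Remainder Theorem, and it is here that the algebraic substance of the theorem is concentrated.
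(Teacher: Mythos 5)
The paper does not prove this statement at all --- it is quoted from Cox--Little--O'Shea (the cited Finiteness Theorem and Corollary 2.5 of Chapter 4), and your sketch correctly reconstructs exactly the standard argument of that source: the standard monomials outside $\langle LT(G)\rangle$ form a Macaulay basis of $R/I$, finiteness of $B$ is equivalent to the leading-term condition, and the decomposition $R/I \cong \prod_{p\in V(I)} R_{\mathfrak{m}_p}/I R_{\mathfrak{m}_p}$ yields the count with multiplicities. The only point to tighten in a full write-up is that the equivalence ``$V(I)$ finite $\iff \dim_{\mathbb{C}} R/I < \infty$,'' which you cite as the ``classical Finiteness Theorem,'' is itself part of the theorem being proved, so you should argue it rather than invoke it (if $\dim_{\mathbb{C}} R/I<\infty$, the powers of each $x_i$ are linearly dependent modulo $I$, giving univariate elements of $I$ and hence finitely many possible coordinates; conversely, if $V(I)$ is finite, apply the Nullstellensatz to $\prod_{a}(x_i-a)$ over the finitely many $i$-th coordinates $a$ to get a power of $x_i$ in $LT(I)=LT(G)$).
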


	To calculate a Gr{\"o}bner basis we use \emph{msolve} \cite{berthomieu2021}, with graded reverse lexicographic monomial order (grevlex).
    We then pass this basis to \emph{Macaulay2} (\emph{M2}) \cite{M2} and use it to verify that the number of solutions is finite and to obtain the number of solutions $(\deg I)$.
	Both \emph{msolve} and \emph{M2} are open source software.
	\emph{Msolve} implements the F4 algorithm \cite{faugere1999}, with parallel algorithms for solving the linear systems generated by F4.
    Table \ref{tab:complex_solutions_number} shows the {``expected number of solutions'' of the polynomial system for different number of points. This is equal to the exact number of (possibly complex) solutions, counted with multiplicity, as defined in Section \ref{sec:intro}.}

    \begin{table}[H]
        \centering
		\caption{Expected solutions of polynomial system for $n \leq 6$. Time and RAM of \emph{msolve} v.0.73 to calculate a Gr\"obner basis (grevlex) using 20 threads.}
		\label{tab:complex_solutions_number}
		\begin{tabular}{cccccccc}
			\toprule
            & \multicolumn{2}{c}{Time (s) \emph{msolve}} & & \multicolumn{2}{c}{Basis} & \multicolumn{2}{c}{\emph{M2}} \\
			$n$ & Total & Lift to $\mathbb{Q}$ & RAM & \# pols & size & $\dim I$ & $\deg I$ \\
			\midrule
			4 & 0.70 & 0.10 & 3.7 MB & 15 & 600 B & 0 & 4 \\
			5 & 0.68 & 0.25 & 3.7 MB & 130 & 77 KB & 0 & 38  \\
			6 & 3963 & 3469 & 37 GB & 2473 & 1.2 GB & 0 & 938 \\
			\bottomrule
		\end{tabular}
	\end{table}

    {Note that, for counting the number of solutions, we only need the leading monomials of a Gr\"obner basis. We opt to calculate the whole basis, and in particular to lift all the basis coefficients to the rationals, to inform the resources needed, as this basis could be used for other purposes in the stage of finding solutions.} 

\subsection{Counting permutations}

	Once we know the number of expected solutions of our polynomial system, we need to find explicit solutions by other methods, until we match the number of expected solutions.
    
	Notice that given $X=W^T W$, and a permutation matrix $P$, then $P^T X P = (WP)^T (WP)$. Since $WP$ is acting by permutations on the points of the original problem on the sphere, and this last problem is invariant by permutations, we have the following result.

	\begin{proposition}
		Let $(X,Z)$ be a solution of the polynomial system given by \eqref{eq:center_mass_null_xij}, \eqref{eq:definition_zij} and \eqref{eq:lagrange_gradient_null_xij_pol}; where $Z \in \mathbb{C}^{n \times n}$ is symmetric, with $Z_{ij} = z_{ij}, \ \forall \ i < j$, $Z_{ii}=0, \ \forall \ i$.
		For every permutation matrix $P \in \mathbb{R}^{n \times n}$, the conjugate by $P$: $\left( P^T X P, P^T Z P \right)$, is also a solution.
	\end{proposition}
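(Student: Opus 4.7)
The plan is to verify directly that the transformed pair $(P^T X P, P^T Z P)$ satisfies each of the three defining equation families \eqref{eq:center_mass_null_xij}, \eqref{eq:definition_zij}, and \eqref{eq:lagrange_gradient_null_xij_pol}. Let $\sigma \in S_n$ denote the permutation associated with $P$, so that conjugation by $P$ sends the $(i,j)$-entry of any symmetric matrix to its $(\sigma(i), \sigma(j))$-entry. Writing $\tilde x_{ij} := x_{\sigma(i)\sigma(j)}$ and $\tilde z_{ij} := z_{\sigma(i)\sigma(j)}$, the task reduces to showing that these tilded variables satisfy each equation family.

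First I would dispatch the auxiliary equations \eqref{eq:definition_zij}: each one reads $\tilde z_{kj}(1 - \tilde x_{kj}) = z_{\sigma(k)\sigma(j)}(1-x_{\sigma(k)\sigma(j)})$, which is simply a reindexed instance of the hypothesis. For the center-of-mass equations \eqref{eq:center_mass_null_xij}, the change of summation variable $k \mapsto k' = \sigma(k)$ converts $\sum_{k \neq j} \tilde x_{jk} = \sum_{k \neq j} x_{\sigma(j)\sigma(k)}$ into $\sum_{k' \neq \sigma(j)} x_{\sigma(j) k'}$, so the equation indexed by $j$ for $(P^T X P, P^T Z P)$ becomes the original equation indexed by $\sigma(j)$, which holds by hypothesis. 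The null-gradient equations \eqref{eq:lagrange_gradient_null_xij_pol} are handled analogously: after the same reindexing $j' = \sigma(j)$, the expression $\sum_{j \neq k} (\tilde x_{ik} - \tilde x_{ij})\tilde z_{kj}$ becomes $\sum_{j' \neq \sigma(k)} (x_{\sigma(i)\sigma(k)} - x_{\sigma(i)j'}) z_{\sigma(k) j'}$, which by the original equation with indices $(\sigma(i),\sigma(k))$ equals $(n-1)\,x_{\sigma(i)\sigma(k)} = (n-1)\tilde x_{ik}$; one also notes that $\sigma(k) \neq \sigma(i)$ iff $k \neq i$, so the quantification is preserved.

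A more conceptual alternative would be to invoke Corollary \ref{cor:complex_solution} to factor $X = W^T W$, observe that $P^T X P = (WP)^T(WP)$ merely permutes the columns (points) of $W$, and then appeal to the manifest permutation symmetry of the original sphere formulation to conclude that $(WP)$ is critical whenever $W$ is. This is essentially the sketch hinted at in the proposition's preamble, but it forces a detour through the factorization step and complex-analytic subtleties that the direct index-chasing argument avoids. Either route is routine, and I foresee no genuine obstacle; the only point requiring care is keeping the correspondence between $P$ and $\sigma$ consistent under transposition when reindexing the sums.
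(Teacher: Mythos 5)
Your direct index-chasing argument is correct: writing the conjugated entries as $\tilde x_{ij}=x_{\sigma(i)\sigma(j)}$, $\tilde z_{ij}=z_{\sigma(i)\sigma(j)}$ and reindexing the sums turns each instance of \eqref{eq:definition_zij}, \eqref{eq:center_mass_null_xij} and \eqref{eq:lagrange_gradient_null_xij_pol} for the new pair into the instance of the same equation for the old pair at the permuted indices, and the quantifiers match because $\sigma$ is a bijection (and conjugation preserves diagonals and symmetry). This is, however, not the route the paper takes: the paper's entire justification is the remark preceding the proposition, namely that $P^TXP=(WP)^T(WP)$ for a factorization $X=W^TW$, so conjugation corresponds to permuting the points of the sphere formulation, which is manifestly symmetric in the points --- i.e.\ exactly your ``conceptual alternative.'' The comparison is worth noting. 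The paper's argument is one line but implicitly leans on the correspondence between the two systems developed in Section \ref{Sec. Relation between solutions of both systems} (Corollary \ref{cor:complex_solution} needs the rank of $X$, and transferring criticality from $W$ back to $(X,Z)$ uses that the columns of $W$ span $\mathbb{C}^{d}$ so that dot products with them detect vanishing); it also recovers $Z$ from $X$ via $z_{kj}=1/(1-x_{kj})$ rather than tracking it explicitly. Your verification works purely at the level of the polynomial generators, with no factorization, no rank hypothesis, and the $Z$-part handled on equal footing; in effect it shows the ideal itself is invariant under the $S_n$-action on the variables, which is a slightly stronger and more self-contained statement. The only care point is the one you already flag: fixing whether $(P^TXP)_{ij}$ equals $x_{\sigma(i)\sigma(j)}$ or $x_{\sigma^{-1}(i)\sigma^{-1}(j)}$ for your convention of $P$, which is immaterial since both are permutations.
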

        
	\begin{corollary}
		If $X$ is a solution to the polynomial system, the number of different solutions given by conjugations of $X$ by permutations, is the size of the orbit of $X$ under conjugation by permutations.
	\end{corollary}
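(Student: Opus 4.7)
The plan is to invoke the group-theoretic framework made available by the preceding proposition. The map $P \mapsto (P^T X P,\, P^T Z P)$ defines a (left) action of the symmetric group $S_n$ on the set of solutions of the polynomial system: the identity permutation acts trivially, and the composition of two conjugations is the conjugation by the product of the corresponding permutation matrices. That this action really lands in the solution set is exactly the content of the previous proposition.

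Next, I would observe that the set of different solutions obtained from $(X,Z)$ by conjugations by elements of $S_n$ is, by definition, the orbit of $(X,Z)$ under this action. Two conjugates $(P_1^T X P_1,\, P_1^T Z P_1)$ and $(P_2^T X P_2,\, P_2^T Z P_2)$ coincide if and only if $P_2 P_1^{-1}$ lies in the stabilizer of $(X,Z)$, so the number of distinct conjugates equals $|S_n|/|\mathrm{Stab}(X,Z)|$, which by the orbit-stabilizer theorem is precisely the cardinality of the orbit.

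I do not foresee a real obstacle: the statement is essentially a packaging of the orbit-stabilizer theorem in the setting set up by the preceding proposition, and its role in the paper is to make explicit that, when we enumerate solutions coming from a known $X$ by permuting its rows/columns, the count we obtain is intrinsic and equal to the orbit size. The only small subtlety worth mentioning in the write-up is that the action must be taken on the pair $(X,Z)$ rather than on $X$ alone, since the $z_{ij}$ variables appear in the system and transform compatibly under conjugation by permutation matrices.
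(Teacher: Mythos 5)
Your argument is correct and matches the paper's (implicit) reasoning: the corollary is an immediate consequence of the preceding proposition together with the definition of an orbit, and the paper likewise counts the distinct conjugates via the orbit--stabilizer theorem, $|\mathrm{Orb}(X)| = n!/|\mathrm{Stab}(X)|$. Your remark about acting on the pair $(X,Z)$ is fine but immaterial here, since $Z$ is determined by $X$ through $z_{kj}(1-x_{kj})=1$, so conjugates of $X$ and of $(X,Z)$ are in bijection and the orbit of $X$ gives the correct count.
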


    Now we give a method to count the different permuted solutions. Instead of calculating $|Orb(X)|$ directly, we calculate the size of the orbit stabilizer subgroup $|Stab(X)|$, as it is easier to implement.
	Both sizes are related by the Orbit Stabilizer Theorem: $$ |Orb(X)| = \frac{|S_n|}{|Stab(X)|} = \frac{n!}{|Stab(X)|} .$$
	To calculate $|Stab(X)|$ for a given $X$, we iterate through all permutation matrices $P$, to count the permutations that satisfy: $X = P^T X P$.

\subsection{Energies and optimal configurations}

	The product energy, as defined in equation \eqref{eq:fekete_prod}, may be written as: $$ E := \prod_{i=1}^{n} \prod_{j=i+1}^{n} \|w_i - w_j\|^2 = 2^{\binom{n}{2}} \prod_{i=1}^{n} \prod_{j=i+1}^{n} \left( 1 - x_{ij} \right) .$$
    An optimal configuration in $S^{d-1}$, is a critical configuration with the maximum product energy, within all critical configurations of $S^{d-1}$.    
    A configuration $W$ is in $S^{d-1} \subset \mathbb{R}^d$ whenever $d \geq rk(W)$. For example, the Equator has $rk(W)=2$, and it can be embedded in any $S^{d-1}$, with $d \geq 2$.  
    Thus, an optimal configuration in $S^{d-1}$ is a critical configuration $W \in \mathbb{R}^{d \times n}$ with the maximum product energy, within all critical configurations with $rk(X) \leq d$, $X=W^T W$.

\section{Results for $n\leq 7$ points}
    
    In what follows, we apply our method to the different formulations presented above and for different numbers of points. This allows us to recover some known results and to prove new theorems in previously unexplored cases.

\subsection{Four and five points} \label{sec:four_and_five_points}

	We now apply our method for the case of four and five points on the sphere. The case of six points will be treated separately as it will require some additional ideas.
	We need to match the number of solutions of Table \ref{tab:complex_solutions_number} with the solutions we may find explicitly.	

    \vspace{1em}
    
    \textbf{Four points.} In this case we have at least two natural candidates for critical configurations: the regular Tetrahedron and the Equator.
	In the Tetrahedron all pairs of points have the same angle, with inner products: $w_i^T w_j = -\frac{1}{3}, \ \forall \ i \neq j$. The dot product matrix is:
	$$ X_{\text{Tet.}} = \begin{pmatrix}
	   1 & \theta & \theta & \theta \\
	   \theta & 1 & \theta & \theta \\
	   \theta & \theta & 1 & \theta \\
	   \theta & \theta & \theta & 1 \\
	\end{pmatrix}, \quad \theta := -\frac{1}{3} .$$
    For the Equator, an associated dot product matrix is:
	$$ X_{\text{Eq.}} = \begin{pmatrix}
		1 & 0 & -1 & 0 \\
		0 & 1 & 0 & -1 \\
		-1 & 0 & 1 & 0 \\
		0 & -1 & 0 & 1 \\
	\end{pmatrix} .$$
    To verify that each $X$ is a solution, we replace its values in the polynomial equations using \emph{M2}.
    Table \ref{tab:orbit_energy_rank_n4} shows the size of the orbit of each $X$ by conjugation. Remember that the elements of each orbit are different solutions of the polynomial system. Their sum matches the number of expected solutions. This proves that for $n=4$ the only kind of critical configurations are the Tetrahedron and the Equator.
    Table \ref{tab:orbit_energy_rank_n4} also shows the Energy and rank associated to each critical configuration. We see that the optimal configurations for $n=4$ are: the Equator in $S^1$ and the Tetrahedron in $S^2$.

	\begin{table}[H]
		\centering
		\caption{$n=4$. Orbit size, energy and rank.}
		\label{tab:orbit_energy_rank_n4}
		\begin{tabular}{cccc}
			Conf. & $|Orb(X)|$ & Energy $E / 2^{n \choose 2}$ & $rank(X)$ \\
			\midrule
			Tetrahedron & 1 & $\simeq 5.619$ & 3 \\
			\midrule
			Equator & 3 & 4.0 & 2 \\
			\midrule\midrule
			Found solutions & 4 &  &  \\
			\cmidrule(lr){1-2}
			Expected solutions & 4 &  &  \\
			\bottomrule
		\end{tabular}
	\end{table}

    \vspace{1em}

    \textbf{Five points.} We have at least three candidates for critical configurations in $S^2$: the Equator and two other denoted 1:3:1 and 1:4. (F\"oppl notation \cite{foppl1912}).
	Configuration 1:3:1 has two points forming a dipole, and three other points equidistributed in a plane through the origin, orthogonal to the dipole. The dot product matrix is:
	$$ X_{1:3:1} = \begin{pmatrix}
		1 & A & 0 & 0 & 0 \\
		A & 1 & 0 & 0 & 0 \\
		0 & 0 & 1 & B & B \\
		0 & 0 & B & 1 & B \\
		0 & 0 & B & B & 1
	\end{pmatrix}, \quad \begin{array}{l} A := -1, \\ B := -\frac{1}{2} \end{array} .$$
	Configuration 1:4 may be represented as the north pole, and four other points equidistributed in a plane orthogonal to the $z$-axis. Using the condition of null center of mass, this plane is $z = -\frac{1}{4}$.
	The associated dot product matrix is:
	$$ X_{1:4} = \begin{pmatrix}
        1 & A & A & A & A \\
        A & 1 & B & C & B \\
        A & B & 1 & B & C \\
        A & C & B & 1 & B \\
        A & B & C & B & 1 \\
	\end{pmatrix}, \quad \begin{array}{l} A := -\frac{1}{4}, \\ B := \frac{1}{16}, \\ C := -\frac{7}{8} \end{array} .$$
    For the Equator, the dot product matrix is:
    $$ X_{\text{Eq.}} = \begin{pmatrix}
	   1 & A & B & B & A \\
	   A & 1 & A & B & B \\
	   B & A & 1 & A & B \\
	   B & B & A & 1 & A \\
	   A & B & B & A & 1
	\end{pmatrix}, \quad \begin{array}{l} A := \frac{-1+\sqrt{5}}{4}, \\ B := \frac{-1-\sqrt{5}}{4} \end{array} .$$
	We also consider a critical configuration on $S^3$, given by the 4-simplex; where any pair of points have the same dot product $\theta$. The condition of null center of mass implies: $\theta = -\frac{1}{4}$.
    
	Table \ref{tab:orbit_energy_rank_n5} shows the size of the orbit of each $X$ by conjugation. Their sum matches the number of expected solutions. This proves that for $n=5$ the only kind of critical configurations are those described in this section.
    Table \ref{tab:orbit_energy_rank_n5} also shows the Energy and rank associated to each critical configuration.
    We see that the optimal configurations for $n=5$ are: the Equator in $S^1$, 1:3:1 in $S^2$, and the 4-simplex in $S^3$. This agrees with already known results. In future sections we will classify the rest of the critical configurations.
    
	\begin{table}[H]
        \centering
		\caption{$n=5$. Orbit size, energy and rank.}
		\label{tab:orbit_energy_rank_n5}
		\begin{tabular}{cccc}
			Configuration & $|Orb(X)|$ & Energy $E / 2^{n \choose 2}$ & $rank(X)$ \\
			\midrule
            4-simplex & 1 & $\simeq 9.313$ & 4 \\
			\midrule
			1:3:1 & 10 & 6.75 & 3 \\
            1:4 & 15 & $\simeq 6.63$ & 3 \\
			\midrule
			Equator & 12 & $\simeq 3.052$ & 2 \\
			\midrule\midrule
			Found solutions & 38 & & \\
			\cmidrule(lr){1-2}
			Expected solutions & 38 & & \\
			\bottomrule
		\end{tabular}
	\end{table}

\subsection{Six points} \label{sec:six_points}

	For six points, the number of expected solutions is 938 (Table \ref{tab:complex_solutions_number}).
	On the other hand, we may imagine the following critical configurations on $S^2$: the Equator, 1:5 and 1:4:1; with dot product matrices:
	$$ X_{1:5} = \begin{pmatrix}
        1 & A & A & A & A & A \\
        A & 1 & B & C & C & B \\
        A & B & 1 & B & C & C \\
        A & C & B & 1 & B & C \\
        A & C & C & B & 1 & B \\
        A & B & C & C & B & 1
	\end{pmatrix}, \quad \begin{array}{l} A := -\frac{1}{5}, \\ B := \frac{ -5 + 6 \sqrt{5} }{25}, \\ C := \frac{ -5 - 6 \sqrt{5} }{25} \end{array} .$$
    $$ X_{1:4:1} = \begin{pmatrix}
        1 & A & 0 & 0 & 0 & 0 \\
        A & 1 & 0 & 0 & 0 & 0 \\
        0 & 0 & 1 & 0 & A & 0 \\
        0 & 0 & 0 & 1 & 0 & A \\
        0 & 0 & A & 0 & 1 & 0 \\
        0 & 0 & 0 & A & 0 & 1
    \end{pmatrix}, \quad A := -1 .$$
    $$ X_{\text{Eq.}} = \begin{pmatrix}
        1 & A & B & C & B & A \\
        A & 1 & A & B & C & B \\
        B & A & 1 & A & B & C \\
        C & B & A & 1 & A & B \\
        B & C & B & A & 1 & A \\
        A & B & C & B & A & 1
	\end{pmatrix}, \quad \begin{array}{l} A := \frac{1}{2}, \\ B := -\frac{1}{2}, \\ C := -1 \end{array} $$
    We also consider the 5-simplex on $S^4$, where all dot products are $\theta = -\frac{1}{5}$.
    Finally, we consider configuration 3:3 on $S^2$, which consists of two parallel planes, each with three equidistributed points, and with the two triangles in phase with each other. The case where triangles have a phase shift of $\frac{\pi}{3}$ is the same as configuration 1:4:1.
    The analysis of 3:3 (with null phase shift) will reveal another critical configuration, which we call ``real conjugate'' of 3:3.

\subsubsection{Configuration 3:3 and its real conjugate}

    Consider configuration 3:3 with null phase shift. This configuration is critical when the triangles are placed at $z = \pm z_0$, where $z_0 := \sqrt{ \frac{ -3 + 2 \sqrt{6} }{5} } \simeq 0.62$.
    To express it in cartesian coordinates, we define the radius of each triangle $R := \sqrt{1-z_0^2}$, and take:
    $$ w_1 = \left( R, 0, -z_0 \right), \ w_2 = \left( -\frac{R}{2}, \frac{R \sqrt{3}}{2}, -z_0 \right), \ w_3 = \left( -\frac{R}{2}, -\frac{R \sqrt{3}}{2}, -z_0 \right) .$$
    $$ w_4 = \left( R, 0, z_0 \right), \ w_5 = \left( -\frac{R}{2}, \frac{R \sqrt{3}}{2}, z_0 \right), \ w_6 = \left( -\frac{R}{2}, -\frac{R \sqrt{3}}{2}, z_0 \right) .$$
    The corresponding dot product matrix is:        
	$$ X_{3:3} = \begin{pmatrix}
		1 & A & A & B & C & C \\
		A & 1 & A & C & B & C \\
		A & A & 1 & C & C & B \\
	    B & C & C & 1 & A & A \\
        C & B & C & A & 1 & A \\
		C & C & B & A & A & 1
	\end{pmatrix}, \quad \begin{array}{l}
    A := \frac{ -7 + 3 \sqrt{6} }{5} \\
    B := \frac{ 11 - 4 \sqrt{6} }{5} \\
    C := \frac{ -1 - \sqrt{6} }{5}
	\end{array} .$$

    To verify that $X_{3:3}$ is critical, we replace its values in the polynomial equations using \emph{M2}.
    As the matrix depends on $\sqrt{6}$, we define a polynomial ring with coefficients in $k:=\mathbb{Q}[t]/ (t^2-6)$. This introduces a variable $t$, such that $t^2=6$.
    We then replace $\sqrt{6}$ by $t$ in $X_{3:3}$, and check that its entries verify the equations in $k[x_{ij},z_{ij}]$.
    As this is true for every $t$ such that $t^2=6$, it implies that if we change $\sqrt{6}$ by $-\sqrt{6}$ in $X_{3:3}$, we also obtain a solution to the polynomial equations. We call this the ``real conjugate'' of 3:3, with matrix:
	$$ X_{\overline{3:3}} = \begin{pmatrix}
		1 & \overline{A} & \overline{A} & \overline{B} & \overline{C} & \overline{C} \\
		\overline{A} & 1 & \overline{A} & \overline{C} & \overline{B} & \overline{C} \\
		\overline{A} & \overline{A} & 1 & \overline{C} & \overline{C} & \overline{B} \\
	    \overline{B} & \overline{C} & \overline{C} & 1 & \overline{A} & \overline{A} \\
        \overline{C} & \overline{B} & \overline{C} & \overline{A} & 1 & \overline{A} \\
		\overline{C} & \overline{C} & \overline{B} & \overline{A} & \overline{A} & 1
    \end{pmatrix}, \quad \begin{array}{l}
    \overline{A} := \frac{ -7 - 3 \sqrt{6} }{5} \simeq -2.87 \\
    \overline{B} := \frac{ 11 + 4 \sqrt{6} }{5} \simeq 4.16 \\
    \overline{C} := \frac{ -1 + \sqrt{6} }{5} \simeq 0.29
	\end{array} .$$
    As $X_{\overline{3:3}}$ has entries with absolute value greater than one, it is not the dot product of points on a unit sphere (it corresponds to a complex critical configuration).
    Note also that we already encountered configurations with non-rational values: the Equator for $n=5$, and 1:5; both depending on $\sqrt{5}$. However, in those cases, changing $\sqrt{5}$ by $-\sqrt{5}$ gives a matrix in the same orbit as the original.
    
	Table \ref{tab:solutions_permutations_n6_inicial} shows the orbit size of each of the ``imaginable'' solutions. Their sum is 268, which is far from the expected 938 solutions.
	As we will show, this is because the polynomial system has other types of solutions, which we are not able to imagine a priori.
		
	\begin{table}[H]
        \centering
		\caption{$n=6$. Number of ``imaginable'' solutions.}
		\label{tab:solutions_permutations_n6_inicial}
		\begin{tabular}{cc}
			\toprule
			Configuration & $|Orb(X)|$ \\
			\midrule
			Equator & 60 \\
			1:5 & 72 \\
			1:4:1 & 15 \\
			3:3 & 60 \\
			3:3 real conj. & 60 \\
			5-simplex & 1 \\
			\midrule\midrule
			Found solutions & 268 \\
			\midrule
			Expected solutions & 938 \\
			\bottomrule
		\end{tabular}
	\end{table}

\subsubsection{Possible values for the dot product} \label{sec:possible_values}

	To identify new solutions, we first find all possible values for one of the variables, say $x_{45}$, and then replace each value in the ideal equations, to find the solutions associated with that value.
	Note that, each time we fix a value for $x_{45}$, we have to solve a polynomial system with much fewer solutions than the original.
    Also, as the problem is invariant under permutations, knowing the values of $x_{45}$ is the same as knowing the possible values of any variable $x_{ij}$.
    
	To find all possible values for $x_{45}$, we calculate a Gr{\"o}bner basis with a monomial order that eliminates all variables, except $x_{45}$. This gives a reduced Gr{\"o}bner basis for the ideal $I \cap \mathbb{Q}[x_{45}]$. As this is a principal ideal domain, its basis is a set with only one polynomial. The roots of this polynomial are the possible values for $x_{45}$.   
	We use \emph{M2} to calculate the factors of the generator of $I \cap \mathbb{Q}[x_{45}]$. Table \ref{tab:generator_factors_n6} shows these factors and their roots.
    Calculating the Gr\"obner basis with \emph{msolve} takes 10 hours and 190 GB of RAM, using 20 threads.

	\begin{table}[H]
    \centering
	\caption{$n=6$. Factors of the generator of $I \cap \mathbb{Q}[x_{45}]$.}
	\label{tab:generator_factors_n6}
	\begin{tabular}{ccc}
		\toprule
		& Generator Factor & Roots \\
		\midrule
		1 & $ x_{45}$ & 0 \\
		2 & $(x_{45} + 1)^2$ & -1 \\
		3 & $ 2 x_{45} - 1 $ & 1/2  \\
		4 & $ 2 x_{45} + 1 $ & -1/2 \\
		5 & $ ( 5 x_{45} - 1 )^2 $ & 1/5 \\
		6 & $ 5 x_{45} + 1 $ & -1/5 \\
		7 & $ ( 5 x_{45} + 7 )^2 $ & -7/5 \\
		8 & $ ( 5 x_{45}^2 + 1 )^2 $ & $\pm \frac{i}{\sqrt{5}}$ \\
		9 & $ 5 x_{45}^2 - 22 x_{45} + 5 $ & $\frac{ 11 \pm 4 \sqrt{6} }{5}$ \\
		10 & $ 5 x_{45}^2 + 2 x_{45} - 1 $ & $\frac{ -1 \pm \sqrt{6} }{5}$ \\
		11 & $ 5 x_{45}^2 + 14 x_{45} - 1 $ & $\frac{ -7 \pm 3 \sqrt{6} }{5}$ \\
		12 & $ 25 x_{45}^2 + 28 x_{45} + 19 $ & $\frac{ -14 \pm i 3 \sqrt{31} }{25}$ \\
		13 & $ 125 x_{45}^2 + 50 x_{45} - 31 $ & $\frac{ -5 \pm 6 \sqrt{5} }{25}$ \\
		14 & $ 100 x_{45}^4 + 95 x_{45}^3 - 21 x_{45}^2 - 22 x_{45} + 10 $ & 4 complex \\
		15 & $ 250 x_{45}^4 + 110 x_{45}^3 - 21 x_{45}^2 - 19 x_{45} + 4 $ & 4 complex \\
		16 & $ 400 x_{45}^4 + 488 x_{45}^3 - 111 x_{45}^2 - 196 x_{45} + 67 $ & 4 complex \\
		17 & $3 x_{45} + 1$ & -1/3 \\
		18 & $5 x_{45} + 4$ & -4/5 \\
		19 & $10 x_{45} - 1$ & 1/10 \\
		20 & $25 x_{45} - 1$ & 1/25 \\
		21 & $25 x_{45} + 11$ & -11/25 \\
		22 & $25 x_{45} + 23$ & -23/25 \\
		\bottomrule
	\end{tabular}
	\end{table}	

    {Note that finding a Gr\"obner basis with an elimination order is usually more expensive than using a grevlex order, both in execution time and memory usage. However, since our ideal is zero dimensional, the projection could be done with any Gr\"obner basis, by means of working in the finite dimensional vector space $R/I$ \cite{cox2005}[Ch. 2, Exercise 2]. As an alternative, a change of monomial order could be applied efficiently by using the FGLM algorithm (for example from grevlex to lex) \cite{faugere_1993}. In particular, the same Gr\"obner basis used to count solutions could be used to find the possible values of the solutions coordinates, without too significant additional effort.}

\subsubsection{Minimal primes and new solutions} \label{sec:minprimes_proc}

	For each factor $h_i$ of the generator of the ideal $I \cap \mathbb{Q}[x_{45}]$, denote by $\sqrt{h_i}$ its square-free part. We consider the ideal $I_i := I + ( \sqrt{h_i} )$, obtained by adding $\sqrt{h_i}$ to the generators of $I$.
    For each $I_i$ we calculate a Gr\"obner basis with \emph{msolve}, using grevlex monomial order, and then use it as input of \emph{M2} to factorize $I_i$ into its minimal primes: $$ I_i = J_1 \cap J_2 \cap \ldots \cap J_k ;$$ where each $J_l$ is a minimal prime ideal of $I_i$.
	The solutions of $I_i$ are: $$ V(I_i) = V(J_1) \cup V(J_2) \cup \ldots \cup V(J_k) .$$
	Thus, we may find the solutions $V(I_i)$ by solving the polynomial system of each ideal $J_l$.
	Appendix \ref{sec:minprimes_sols} shows the results for the factors that give new solutions. There are seven new solutions, some of them with complex coordinates.
    Execution time and RAM is negligible, except for Factor 12, where \emph{msolve} takes 253s and 16 GB.

\subsubsection{Orbits including new solutions}

	Table \ref{tab:solutions_permutations_n6} shows the critical configurations identified for six points and the size of each orbit by conjugation. There are still 90 solutions remaining to match the 938 expected solutions.
	As we show in the next section, this is because ``Complex 1'' has multiplicity greater than one. This, together with the value of expected number of solutions, implies that ``Complex 1'' has multiplicity exactly two, and that we have found all critical configurations for six points.

	\begin{table}[H]
        \centering
		\caption{$n=6$. Orbit size and expected number of solutions.}
		\label{tab:solutions_permutations_n6}
		\begin{tabular}{cc}
			\toprule
			Configuration & $|Orb(X)|$ \\
			\midrule
			Equator & 60 \\
			1:5 & 72 \\
			1:4:1 & 15 \\
			3:3 (and real ``conjugate'') & 60 + 60 \\
			Complex 1 & 90 \\
			Complex 2 & $360 = 180 \times 2$ \\
			{Real 1} & 15 \\
			{Real 2} & 45 \\
			{Real 3} & 60 \\
			{Real 4} & 10 \\
			5-simplex & 1 \\
			\midrule
			Found solutions & 848 \\
			\midrule
			Expected solutions & 938 \\
			\midrule
			Difference & $938 - 848 = 90$ \\
			\bottomrule
		\end{tabular}
	\end{table}

\subsubsection{Multiplicity of ``Complex 1''}
	
	\begin{proposition}[{\cite[Theorem 5.1]{hartshorne2013algebraic}}]
		Consider an ideal $I \subset \mathbb{Q}[x_1,\ldots,x_m]$, generated by polynomials $\{ g_1, \ldots, g_r \}$.
		Define the Jacobian matrix of the ideal generators as: $$ J (x) \in \mathbb{R}^{r \times m} \quad / \quad J(x)_{ij} := \frac{\partial g_i}{\partial x_j}(x), \quad \forall \ x \in \mathbb{C}^m .$$
		Let $\hat{x}$ be a solution to the system of equations associated with the ideal generators.
		This solution has multiplicity greater than one, if and only if, the matrix $J(\hat{x})$ is rank deficient.
	\end{proposition}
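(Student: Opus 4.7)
The plan is to reduce the statement to the standard Jacobian criterion for smoothness of a zero-dimensional scheme at an isolated closed point. Without loss of generality translate $\hat{x}$ to the origin, and let
\[
    A := \bigl(\mathbb{C}[x_1,\ldots,x_m]/I\mathbb{C}[x_1,\ldots,x_m]\bigr)_{\mathfrak{m}_{\hat{x}}}
\]
be the local ring of the scheme $V(I)$ at $\hat{x}$, with maximal ideal $\mathfrak{m}$. Since by hypothesis the system has finitely many complex solutions, $\hat{x}$ is an isolated zero and $A$ is an Artinian local $\mathbb{C}$-algebra whose $\mathbb{C}$-vector space dimension is, by definition, the multiplicity of $\hat{x}$. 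Multiplicity one is therefore equivalent to $A = \mathbb{C}$, i.e.\ $\mathfrak{m}=0$. By Nakayama's lemma applied to the finitely generated module $\mathfrak{m}$, this is in turn equivalent to the vanishing of the Zariski cotangent space $\mathfrak{m}/\mathfrak{m}^2$. Thus the proposition reduces to computing $\dim_{\mathbb{C}}\mathfrak{m}/\mathfrak{m}^2$ in terms of $J(\hat{x})$.

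Next I would carry out that computation. The cotangent space $\mathfrak{m}_{\hat{x}}/\mathfrak{m}_{\hat{x}}^2$ of the ambient polynomial ring at the origin is canonically the $m$-dimensional $\mathbb{C}$-vector space spanned by $x_1,\ldots,x_m$. Expanding each generator $g_i$ in a Taylor series about $\hat{x}=0$, its image in this quotient is exactly its linear part
\[
    \overline{g_i} \;=\; \sum_{j=1}^{m} \frac{\partial g_i}{\partial x_j}(\hat{x})\, x_j,
\]
i.e.\ the $i$-th row of $J(\hat{x})$ viewed as a linear form. Since localization and the quotient by $I$ commute with passage to $\mathfrak{m}/\mathfrak{m}^2$, one obtains the identification
\[
    \mathfrak{m}/\mathfrak{m}^2 \;\cong\; \mathbb{C}^m \big/ \mathrm{rowspan}\bigl(J(\hat{x})\bigr),
\]
so that $\dim_{\mathbb{C}} \mathfrak{m}/\mathfrak{m}^2 = m - \mathrm{rank}\, J(\hat{x})$. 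Combining with the first step, multiplicity one is equivalent to $\mathrm{rank}\, J(\hat{x}) = m$, and dually, multiplicity strictly greater than one is equivalent to $J(\hat{x})$ being rank-deficient, as claimed.

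The one step that requires more than bookkeeping is the identification of $\mathfrak{m}/\mathfrak{m}^2$ with the cokernel of $J(\hat{x})$'s rows: one must check that higher-order terms in the Taylor expansions of the $g_i$ contribute nothing (they lie in $\mathfrak{m}_{\hat{x}}^2$, hence vanish in the cotangent quotient), and that the passage from the polynomial ring to the localization $A$ does not lose or add relations of order one. Both are standard, so I expect no real obstacle beyond a careful invocation of Nakayama's lemma together with this canonical identification of the Zariski cotangent space with the cokernel of the transpose of the Jacobian.
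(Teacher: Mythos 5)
Your proposal is correct. Note that the paper does not prove this proposition at all: it is stated as a citation of the Jacobian criterion in Hartshorne, and the multiplicity computation is then delegated to that black box. What you have written is precisely the standard local-algebra proof of that criterion specialized to the zero-dimensional situation used in the paper: multiplicity of $\hat{x}$ is by definition $\dim_{\mathbb{C}}$ of the Artinian local ring $A$, multiplicity one is equivalent to $\mathfrak{m}=0$, Nakayama reduces this to the vanishing of $\mathfrak{m}/\mathfrak{m}^2$, and the cotangent space is identified with $\mathbb{C}^m/\mathrm{rowspan}\,J(\hat{x})$. Two small points are worth making explicit if you write this up. First, in the identification of the image of $I$ in the ambient cotangent space with the row span of $J(\hat{x})$, you need that an arbitrary element $h=\sum_i a_i g_i$ of $I$ has linear part at $\hat{x}$ equal to $\sum_i a_i(\hat{x})$ times the linear part of $g_i$; this uses $g_i(\hat{x})=0$, and it is exactly what guarantees that no element of $I$ outside the chosen generating set contributes new linear forms. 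Second, ``rank deficient'' in the statement must be read as $\mathrm{rank}\,J(\hat{x})<m$ (the number of variables), not relative to $\min(r,m)$; your reading agrees with the paper's later use, where a $30\times 51$ Jacobian of rank $29$ is declared deficient. With these clarifications your argument is a complete and self-contained substitute for the citation, under the standing hypothesis (satisfied in the paper, where $\dim I=0$) that $\hat{x}$ is an isolated solution so that its multiplicity is well defined.
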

  
	For ``Complex 1'', $J(\hat{x},\hat{z})$ has size $30 \times 51$ and rank 29. This implies that ``Complex 1'' has multiplicity greater than one, as we wanted to prove. Calculations are done with \emph{M2} code \emph{solMultiplicty.m2}.

\subsubsection{Cartesian coordinates on the sphere} \label{sec:cartesian_coords_sphere}
    
	Table \ref{tab:eigenvalues_x_n6} shows the eigenvalues of each solution.
    When $X$ is real, symmetric and positive semi-definite, we use Proposition \ref{prop:x_es_dot_prod} to obtain a critical configuration on the unit sphere (Appendix \ref{sec:cartesian_coord_new}). 
    For the other cases, although we cannot obtain a real solution on the sphere, we can obtain a complex solution applying Corollary \ref{cor:complex_solution}.
    This shows that Problem \eqref{eq:fekete_log} has three complex critical configurations.

	\begin{table}[H]
        \centering
		\caption{$n=6$. Properties of solution matrix $X$.}
		\label{tab:eigenvalues_x_n6}
		\begin{tabular}{ccccc}
			Conf. & $X$ real & $X$ eigenvalues $\neq 0$ & $X \succeq 0$ & $rk(X)$ \\
			\midrule
			Equator & yes & 3 $(\times 2)$ & yes & 2 \\
			1:5 & yes & $\frac{6}{5}$ $(\times 1)$, $\frac{12}{5}$ $(\times 2)$ & yes & 3 \\
			1:4:1 & yes & 2 $(\times 3)$ & yes & 3 \\
			3:3 ($\sqrt{6}$) & yes & $\simeq 2.28 (\times 1)$, $\simeq 1.86 (\times 2)$ & yes & 3 \\
			3:3 ($-\sqrt{6}$) & yes & $\simeq -9.48 (\times 1)$, $\simeq 7.74 (\times 2)$ & no & 3 \\
			Complex 1 & no & $\frac{6}{5} (\times 1)$, $\frac{12}{5} (\times 2)$ & yes & 3 \\
			Complex 2 & no & 3 complex different & no & 3 \\
			{Real 1} & yes & $\frac{4}{3} (\times 3)$, $2 (\times 1)$ & yes & 4 \\
			{Real 2} & yes & $\frac{6}{5} (\times 2)$, $\frac{9}{5} (\times 2)$ & yes & 4 \\
			{Real 3} & yes & $\frac{6}{5} (\times 1)$, $\frac{36}{25} (\times 2)$, $\frac{48}{25} (\times 1)$ & yes & 4 \\
			{Real 4} & yes & $\frac{3}{2} (\times 4)$ & yes & 4 \\
            5-simplex & yes & $\frac{6}{5} (\times 5)$ & yes & 5 \\
			\bottomrule
		\end{tabular}
	\end{table}

\subsubsection{Energies and solution in $S^3$ for six points}

	Table \ref{tab:energy_rank_n6} shows the Energy and rank associated to each critical configuration that can be expressed as $X = W^T W$, with $W \in \mathbb{R}^{rk(X) \times n}$. 
    We conclude that the optimal configurations for $n=6$ are: the Equator in $S^1$, 1:4:1 in $S^2$, {Real 4} in $S^3$, and the 5-simplex in $S^4$.
    
	\begin{table}[H]
        \centering
		\caption{$n=6$. Energy, rank and orbit size.}
		\label{tab:energy_rank_n6}
		\begin{tabular}{cccc}
			Configuration & Energy $E / 2^{n \choose 2}$ & $rk(X)$ & |Orb(X)| \\
			\midrule
            5-simplex & $\simeq 15.41$ & 5 & 1 \\
			\midrule
            {Real 4} & $\simeq 11.39$ & 4 & 10 \\
            {Real 1} & $\simeq 11.24 $ & 4 & 15 \\
			{Real 3} & $\simeq 11.17 $ & 4 & 60 \\
			{Real 2} & $\simeq 10.97 $ & 4 & 45 \\
			\midrule
			1:4:1 & $ 8.00 $ & 3 & 15 \\
            3:3 ($\sqrt{6}$) & $ \simeq 6.62 $ & 3 & 60 \\
            3:3 ($-\sqrt{6}$) & $\notin S^2$ & 3 & 60 \\
            1:5 & $ \simeq 5.05 $ & 3 & 72 \\
            Complex 1 & non real coord. & 3 & 90 \\
            Complex 2 & non real coord. & 3 & 360 \\
			\midrule
			Equator & $ \simeq 1.42 $ & 2 & 60 \\
			\bottomrule
		\end{tabular}
	\end{table}	

    For each sphere $S^{d-1}$, the optimal configuration $X$ has the maximum number of conjugate symmetries (minimum orbit size).
    This is also true for $n=4$ and $n=5$, as can be seen in Tables \ref{tab:orbit_energy_rank_n4} and \ref{tab:orbit_energy_rank_n5}.
    Also, within each sphere $S^{d-1}$, the product energy increases as the conjugate symmetries increase (decreases with the orbit size); the only exception being {Real 2} and {Real 3}.    

\subsection{Geometric interpretation on $S^3$} \label{sec: Geometric interpretation on S^3}

	{Real 4}, which is the solution on $S^3 \subset \mathbb{R}^4$ for six points, consists of two equilateral triangles, each inscribed in a copy of $S^1$ lying in orthogonal spaces.
    In particular, from the cartesian coordinates given in Equation \eqref{eq:cartesian_new4} of the appendix, we see that points $w_0$, $w_1$ and $w_2$ form an equilateral triangle, inscribed in an $S^1$. This is because they have the same pairwise angles: $\cos^{-1}(-1/2) = \frac{2 \pi}{3}$, and belong to the same plane through the origin: $w_0 + w_1 = -w_2$. The same happens with points $w_3$, $w_4$ and $w_5$, which form an equilateral triangle, inscribed in another $S^1$. Finally, these circumferences lie in orthogonal spaces: $$ w_i^T w_j = 0, \ \forall \ i \in \{0,1,2\}, \ j \in \{3,4,5\} .$$

    The other critical configurations can be viewed as given by the fibration of $S^3$ by spheres, as follows. 
    {Real 1} is, in fact, analogous to the 1:4:1 configuration of $S^2$: it has the poles and the optimal configuration for 4 points in the equatorial sphere.
    {Real 3} is, in turn, analogous to the 1:5 configuration of $S^2$: it has a pole and the optimal configuration for 5 points in the corresponding sphere.
    {Real 2} has no analogous configuration on $S^2$. It has 4 points on the Equator of a sphere and the optimal configuration for 2 points in another sphere, with the peculiarity that the line that passes through these two points is orthogonal to the plane of the 4 point Equator.

\subsection{Classification of critical configurations}

	We now classify all critical configurations, using the Hessian of the Lagrangian.
	First consider the case of the unit sphere $S^2 \subset \mathbb{R}^3$.
	The gradient of the Lagrangian of Problem \eqref{eq:fekete_log} has coordinate functions:
	$$ \frac{\partial L}{\partial w_k} = - \sum_{j=1, j \neq k}^{n} \frac{ 2 \left( w_k - w_j \right) }{ \| w_k - w_j \|^2 } + (n-1) w_k, \quad \forall \ k=1,\ldots,n .$$
	These may be written as:
	$$ \frac{\partial L}{\partial w_k} = \sum_{j=1, j \neq k}^{n} f(w_k-w_j) + (n-1) w_k, \quad \forall \ k=1,\ldots,n ,$$
	where we define the auxiliary function $f: \mathbb{R}^3 \to \mathbb{R}^3$, such that $$ f(w) := - \frac{2 w}{ \| w \|^2 } = -\frac{ 2 (x,y,z) }{ x^2 + y^2 + z^2 }, \quad w := (x,y,z) .$$
    The Jacobian of $f$ is:
	$$ J_f(w) := \frac{-2}{\| w \|_2^4} \begin{pmatrix}
		-x^2 + y^2 + z^2 & -2xy & -2xz \\
		 -2xy & x^2 - y^2 + z^2 & -2yz \\
		-2xz & -2yz & x^2 + y^2 - z^2 \\
	\end{pmatrix} .$$
	Using this matrix, we can write the Hessian $H_L$ of the Lagrangian as $n^2$ blocks of $3 \times 3$ matrices, given by: 
	$$ (H_L)_{ii} := \frac{\partial L}{\partial w_i^2} = \sum_{j=1, j \neq i} J_f(w_i-w_j) + (n-1) \textrm{Id}_3, \quad \forall \ i .$$	
	$$ (H_L)_{ij} := \frac{\partial L}{\partial w_i w_j} = - J_f(w_i-w_j), \quad \forall \ i \neq j .$$
    
	These expressions also apply to points on $S^{d-1} \subset \mathbb{R}^d$, with $H_L \in \mathbb{R}^{d n \times d n}$.
    As we are interested in the eigenvalues of $H_L$ associated with directions of the tangent space of the product of spheres, we project $H_L$ onto this tangent space. The projected matrix is: $$ h_L := V^T H_L V, \quad h_L \in \mathbb{R}^{n(d-1) \times n(d-1)} ;$$
    where the columns of $V$ are an orthonormal basis of the tangent space.
    This basis is obtained as the orthogonal complement of a basis of the normal space: $B_n = \{v_1,\ldots,v_n\}$; where $v_i = \left( \vec{0}, w_i, \vec{0} \right) \in \mathbb{R}^{nd}$.
    We use the following result to classify the critical configurations.

    \begin{proposition}
        Consider a critical configuration $w$ of Problem \eqref{eq:fekete_log}. Denote by $h_L$ the Hessian of the Lagrangian at $w$, projected onto the tangent space of the product of $n$ spheres $S^{d-1}$.
        \begin{enumerate}
            \item if $h_L$ has at least one negative eigenvalue, then $w$ is a saddle configuration.

            \item if $h_L$ has eigenvalue zero with multiplicity exactly $d(d-1)/2$, and all other eigenvalues of $h_L$ are positive, then $w$ is a local minimum (possibly global).
            
            \item If $w$ is a saddle configuration in $S^k$, it is also a saddle in $S^l$, $\forall \ l \geq k$.
        \end{enumerate}
    \end{proposition}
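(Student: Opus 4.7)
The plan is to establish each of the three claims in turn, using the second-order condition on the product-of-spheres manifold together with the $O(d)$-invariance of the logarithmic energy.

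For (1), I would invoke the standard second-order necessary condition for a constrained local minimum. Since $h_L$ is the Hessian of the Lagrangian projected onto the tangent space of $(S^{d-1})^n$ via the orthonormal basis in the columns of $V$, a negative eigenvalue provides a tangent direction along which the Lagrangian strictly decreases to second order; hence $w$ is not a local minimum. By Corollary~1.3 of \cite{beltran2013}, the logarithmic energy has no local maxima, so $w$ is neither extremum, and is therefore a saddle.

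For (2), the idea is to identify the $d(d-1)/2$ zero eigenvalues with the infinitesimal action of $O(d)$ on the configuration. The orthogonal group acts on $(S^{d-1})^n$ by $Q \cdot (w_1, \ldots, w_n) = (Qw_1, \ldots, Qw_n)$, and the Lagrangian is $O(d)$-invariant. The Lie algebra $\mathfrak{o}(d)$ consists of skew-symmetric matrices and has dimension $d(d-1)/2$. For any $A \in \mathfrak{o}(d)$, the orbit-tangent vector $v_A := (Aw_1, \ldots, Aw_n)$ satisfies $w_i^T A w_i = 0$, so it lies in the tangent space of the product of spheres, and since the energy is constant along the orbit through $w$ one has $v_A \in \ker(h_L)$. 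Whenever the configuration spans $\mathbb{R}^d$ (which is implicit in the hypothesis of~(2)), the map $A \mapsto v_A$ is injective, producing exactly $d(d-1)/2$ dimensions in $\ker(h_L)$. Under the hypothesis of (2), these exhaust the kernel, and $h_L$ is strictly positive on a transverse complement. A Morse--Bott argument, or equivalently the slice theorem for the compact group action of $O(d)$ on the critical set $O(d) \cdot w$, then yields a neighborhood of $w$ in $(S^{d-1})^n$ on which the Lagrangian is bounded below by its value at $w$, establishing that $w$ is a local minimum.

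For (3), I would use the natural embedding $\mathbb{R}^k \hookrightarrow \mathbb{R}^l$ into the first $k$ coordinates, which induces an isometric inclusion $(S^{k-1})^n \hookrightarrow (S^{l-1})^n$ sending $w$ to $w$ with zero padding. The block formulas for $H_L$ depend only on the pairwise differences $w_i - w_j$ and their norms, which are unchanged by the inclusion. Therefore the restriction of $h_L$ computed in $S^{l-1}$ to tangent vectors lying in the image of $T_w (S^{k-1})^n$ coincides with $h_L$ computed in $S^{k-1}$. Any negative eigenvector direction in $S^{k-1}$ thus produces a negative eigenvalue direction for $h_L$ in $S^{l-1}$, and part (1) concludes that $w$ is a saddle in $S^{l-1}$.

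The main obstacle lies in part (2): one must rigorously pass from positive-definiteness of $h_L$ transverse to the $O(d)$-orbit to a genuine local minimum of the Lagrangian on $(S^{d-1})^n$ itself. The Morse--Bott lemma applies because the critical locus contains the smooth compact manifold $O(d) \cdot w$ and the Hessian has constant rank along it, and this is exactly what provides the local normal form needed to conclude the proof.
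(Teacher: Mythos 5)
Parts (1) and (2) of your proposal follow essentially the same route as the paper: (1) combines the second-order necessary condition on the tangent space with the absence of local maxima, and (2) identifies the $d(d-1)/2$ null directions of $h_L$ with the infinitesimal $O(d)$-action and requires positivity transverse to the orbit. Your treatment of (2) is in fact more careful than the paper's one-line ``the Hessian classifies'': positive semidefiniteness with a nontrivial kernel does not by itself yield a local minimum, and the Morse--Bott/slice argument along the compact orbit $O(d)\cdot w$ is exactly the justification the paper leaves implicit. The only caveat is the spanning assumption you call ``implicit in the hypothesis'': it is what makes $A \mapsto (Aw_1,\dots,Aw_n)$ injective, hence the orbit of dimension exactly $d(d-1)/2$, and it is used silently by the paper as well; strictly speaking one should either show that the hypothesis of (2) forces the configuration to span $\mathbb{R}^d$, or state it as an assumption.

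There is, however, a genuine gap in your part (3). You restrict $h_L$ (computed in $S^{l-1}$) to the embedded tangent space of $(S^{k-1})^n$ and then invoke part (1), which presupposes that the saddle in $S^{k-1}$ already exhibits a negative eigenvalue of its projected Hessian. The hypothesis of (3) is only that $w$ is a saddle, i.e., a critical point that is neither a local minimum nor a local maximum; a degenerate saddle with $h_L \succeq 0$, whose failure of minimality is visible only at higher order, satisfies the hypothesis but gives your restriction argument nothing to work with, so the step ``part (1) concludes'' fails in general. The paper's argument avoids the Hessian here: since $w$ is not a local minimum in $(S^{k-1})^n$, there are configurations arbitrarily close to $w$ with strictly smaller energy; under the isometric inclusion $(S^{k-1})^n \hookrightarrow (S^{l-1})^n$ these remain arbitrarily close, $w$ is still critical in $S^{l-1}$ (the stationarity equations are preserved by zero-padding, as your block-formula observation shows), and there are no local maxima, so $w$ is a saddle in $S^{l-1}$. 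Replacing your eigenvalue step with this energy-level persistence argument closes the gap; as written, your proof of (3) establishes only the weaker statement for saddles with a strictly negative Hessian direction.
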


    \begin{proof}
    \begin{enumerate}
        \item Problem \eqref{eq:fekete_log} does not admit local maxima. The existence of a negative eigenvalue implies the critical configuration is not a local minima, and so it must be a saddle.
        
        \item As the energy remains constant under rotations, $h_L$ will always have zero as an eigenvalue, with multiplicity at least the dimension of the orthogonal group: $|O(d)| = d(d-1)/2$. When this inequality is exact, zero is associated to rotations only. If the other eigenvalues are positive, then the Hessian classifies and the critical configuration is a local minima.

        \item If $w$ is a saddle point in $S^k$, there exists a local direction in which the energy value increases. This direction remains if we increase the dimension of the sphere.
    \end{enumerate}

    \end{proof}

	The eigenvalues of each $h_L$ are given in Appendix \ref{sec:eigenvalues_hessian}.
    Table \ref{tab:classification_n6} shows the classification of each critical configuration.
    All configurations are saddle points or global minima, except for ``{Real 1}'', which is a local minima in $S^3$, that is not a global minima.
    Also, every saddle point has a negative direction associated with the projected Hessian.
    Note that the Equator is a global minima in $S^1$, and a saddle point in $S^2$. This also happens with other configurations.
    
	\begin{table}[H]
    \centering
	\caption{$n=6$. Classification. S is Saddle, GM is global minima, and SM is local minima that is not global (spurious).}
	\label{tab:classification_n6}
	\begin{tabular}{ccccc}
		Configuration & $S^1$ & $S^2$ & $S^3$ & $S^4$ \\
		\midrule
		Equator & GM & S & S & S \\
		\midrule
		1:5 & - & S & S & S \\
		1:4:1 & - & GM & S & S \\
		3:3 ($\sqrt{6}$) & - & S & S & S \\
		\midrule
		{Real 1} & - & - & SM & S \\
		{Real 2} & - & - & S & S \\
		{Real 3} & - & - & S & S \\
		{Real 4} & - & - & GM & S \\
		\midrule
		5-simplex & - & - & - & GM \\
		\bottomrule
	\end{tabular}
	\end{table}

\subsection{{Seven Points}}

    As we mentioned in Section \ref{sec:related_work}, for seven points there is no known solution for the Fekete problem in $S^2$. To the best of our knowledge, no solution is known for seven points in $S^3$. The solution is already known for $S^4$ \cite{Dragnev2016}[Theorem 1.9] and $S^5$ \cite{kolushov1997}.
    
    We were unable to apply our method directly to the case of seven points, because the computation of the required Gr\"obner bases did not terminate using the time and resources at our disposal. However, we were able to compute the desired Gr\"obner bases if the polynomial system is simplified by adding one additional constraint, namely the presence of at least one dipole. Our main result in this section is that the only critical configuration in $S^2$ having at least one dipole is the 1:5:1.
    Our numerical experiments support the conjecture that this configuration is also the optimal configuration without the additional constraint.
    Our theorem reduces the problem of proving the conjecture, namely that 1:5:1 is the solution of the Fekete problem for seven points in $S^2$, to showing that some energy minimizer must necessarily have at least one dipole.

    \subsubsection{Numerical Experiments}

    We consider the Fekete problem as formulated in Equation \eqref{eq:fekete_log}, with objective function given by the logarithmic energy of $n$ points: $$ E_{\log} := -\log(E) = - \sum_{i=1}^{n} \sum_{j=i+1}^{n} \log \left( \|w_i - w_j\|^2 \right) .$$
    To estimate the local minima of the logarithmic energy in $(S^{d-1})^n$, we apply Riemannian Gradient Descent (RGD) on the manifold given by the product of the $n$ spheres. We use the package \emph{Pymanopt} \cite{pymanopt_2016} to define the manifold and to calculate the projection to the tangent space and the retraction to the manifold. The step size in the tangent space is calculated with our own implementation of the Armijo rule, with parameters $\tau = 0.5$, $r = 1 \times 10^{-4}$ and $\alpha_0 = 0.1$ \cite{boumal_2023}[Alg. 4.2, p. 62].
    The iterations are stopped when the norm of the projected gradient is less than $1 \times 10^{-5}$.
    Algorithm \ref{alg:gradient_descent_pseudocode} gives a pseudocode of the implemented algorithm, which can be found in the notebook \emph{gradient\_descent\_pymanopt}.
    For each sphere $S^{d-1}$ we perform multiple tests, each with a different random initial configuration. We increase the number of tests together with the dimension of the sphere, to take into account that the search space is getting bigger.

    \begin{algorithm}
    \caption{Riemannian Gradient Descent on the product of $n$ spheres $S^{d-1}$ with Armijo rule.}
        \textbf{Input:} Initial configuration $w^0 \in \left( S^{d-1} \right)^n$, $\text{gradTol}$, $r > 0$, $\tau > 0$, $\alpha_0 > 0$, maxIters. Retraction and Projection operators $R_{w}$ and $P_{w}$, respectively.
        
        \textbf{Output:} configuration $w^k$ with $\| P_{w^k} \left( \nabla E_{\log}(w^k) \right) \| \leq \text{gradTol}$.\\
        
        \begin{algorithmic}[1]
        \State{$k \gets 0$, $\alpha \gets \alpha_0$}
        \State{$\nabla_{T} E_{\log} \gets P_{w^k} \left( \nabla E_{\log}(w^k) \right)$} \Comment{Project gradient to tangent space at $w^k$.}
        \While{$k < \text{maxIters and } \| \nabla_{T} E_{\log} \| > \text{gradTol}$}\\

            \While{$E_{\log} \left( R_{w^k} \left( - \alpha \nabla_{T} E_{\log} \right) \right) > E_{\log}(w^k) - r \alpha \| \nabla_{T} E_{\log} \|^2$}
                \State{$\alpha \gets \tau \alpha$}            \Comment{Update stepsize with Armijo rule.}
            \EndWhile \\

        \State{$w^{k+1} \gets R_{w^k} \left( - \alpha \nabla_T E_{\log} \right)$} \Comment{Update configuration using stepsize $\alpha$.} \\

        \State{$\alpha \gets \frac{\alpha}{\tau}$}  \Comment{Initial Armijo stepsize for next iteration.} \\

        \State{$\nabla_{T} E_{\log} \gets P_{w^{k+1}} \left( \nabla E_{\log}(w^{k+1}) \right)$} \Comment{Projected gradient for next iteration.} \\
        
        \State{$k \gets k + 1$}
        
        \EndWhile
        \end{algorithmic}    
        \label{alg:gradient_descent_pseudocode}
    \end{algorithm}
    
    For $S^2$ we perform 1000 tests, and in all cases we obtain a configuration with energy value: $E_{\log} = -16.3649557$, when rounding up to seven digits. For each test, we also check that the estimated solution has a dipole. This, together with Theorem \ref{theorem:dipole_n7} and the termination criteria of the algorithm, implies that each estimated configuration is of the kind 1:5:1. These results suggest the following conjecture, which already appears in the bibliography, although without the details of the numerical experiments in which it is based \cite{beltran2020}[Conjecture 11.1], \cite{constantineau2023}[Table 1.1].

    \begin{conjecture}
        Configuration 1:5:1 is the unique kind of solution of the Fekete problem for $n=7$ points in $S^2$.
    \end{conjecture}

    For $S^3$ we perform 5000 tests, and in all cases we obtain a configuration with the following energy, when rounding up to seven digits: $E_{\log} = -17.1587805$. For each test, we check that the estimated configuration has the following dot product matrix (or a permutation of it):
    $$ W W^T =
    \begin{pmatrix}
        1 & -1 & 0 & 0 & 0 & 0 & 0 \\
        -1 & 1 & 0 & 0 & 0 & 0 & 0 \\
        0 & 0 & 1 & -1 & 0 & 0 & 0 \\
        0 & 0 & -1 & 1 & 0 & 0 & 0 \\
        0 & 0 & 0 & 0 & 1 & -\frac{1}{2} & -\frac{1}{2} \\
        0 & 0 & 0 & 0 & -\frac{1}{2} & 1 & -\frac{1}{2} \\
        0 & 0 & 0 & 0 & -\frac{1}{2} & -\frac{1}{2} & 1 \\
    \end{pmatrix} .$$

    This configuration has two dipoles, formed by the pairs $(w_1, w_2)$, $(w_3, w_4)$, and a regular simplex given by three points $w_5$, $w_6$ and $w_7$. Also, the dipoles are orthogonal to each other and to the regular simplex. Based on the F\"opl notation, we denote this configuration as $4_{S^1} \times 3_{S^1}$, where we use $\times$ since the configuration lies in $S^1 \times S^1$. These experimental results suggest the following conjecture.
    
    \begin{conjecture}
        Configuration $4_{S^1} \times 3_{S^1}$ is the unique kind of solution of the Fekete problem for $n=7$ points in $S^3$.
    \end{conjecture}

    For $S^4$ it is known that the Fekete problem has exactly two local minima, with one of them being a spurious local minima \cite{Dragnev2023}[Theorem 1.1]. In particular, the global minima is given by ``two orthogonal simplexes, an equilateral triangle and a regular tetrahedron, inscribed in a great circle and a great 3-D hypersphere'' \cite{Dragnev2016}[Theorem 1.9]. The spurious local minima is given by a dipole and a regular simplex of five points, orthogonal to each other.
    Our numerical experiments recover both local minima. For this we perform 10.000 tests, and we obtain configurations with two different energy values, as shown in Table \ref{tab:gd_iterations_energy} in the row of $S^4$.
    For each energy value, $E_{\log}^{(1)}$ and $E_{\log}^{(2)}$, we obtain configurations with respective Cartesian coordinate matrices $W_1$ and $W_2$, which correspond to the known local minima, as can be seen in the associated dot product matrices:
    $$ W_1 W_1^T = \begin{pmatrix}
        1 & -\frac{1}{3} & -\frac{1}{3} & -\frac{1}{3} & 0 & 0 & 1 \\
        -\frac{1}{3} & 1 & -\frac{1}{3} & -\frac{1}{3} & 0 & 0 & 0 \\
        -\frac{1}{3} & -\frac{1}{3} & 1 & -\frac{1}{3} & 0 & 0 & 0 \\
        -\frac{1}{3} & -\frac{1}{3} & -\frac{1}{3} & 1 & 0 & 0 & 0 \\
        0 & 0 & 0 & 0 & 1 & -\frac{1}{2} & -\frac{1}{2} \\
        0 & 0 & 0 & 0 & -\frac{1}{2} & 1 & -\frac{1}{2} \\
        0 & 0 & 0 & 0 & -\frac{1}{2} & -\frac{1}{2} & 1 \\
    \end{pmatrix}, \
    W_2 W_2^T = \begin{pmatrix}
        1 & -1 & 0 & 0 & 0 & 0 & 0 \\
        -1 & 1 & 0 & 0 & 0 & 0 & 0 \\
        0 & 0 & 1 & -\frac{1}{4} & -\frac{1}{4} & -\frac{1}{4} & -\frac{1}{4} \\
        0 & 0 & -\frac{1}{4} & 1 & -\frac{1}{4} & -\frac{1}{4} & -\frac{1}{4} \\
        0 & 0 & -\frac{1}{4} & -\frac{1}{4} & 1 & -\frac{1}{4} & -\frac{1}{4} \\
        0 & 0 & -\frac{1}{4} & -\frac{1}{4} & -\frac{1}{4}  & 1 & -\frac{1}{4} \\
        0 & 0 & -\frac{1}{4} & -\frac{1}{4} & -\frac{1}{4} & -\frac{1}{4} & 1 \\
    \end{pmatrix} .$$

    \begin{table}[H]
    \centering
	\caption{RGD for $n=7$ points in $S^{d-1} \subset \mathbb{R}^d$. Iterations and final energy, with the number of times RGD converges to that value (in percentage).}
	\label{tab:gd_iterations_energy}
	\begin{tabular}{cc|ccc|cc}
        \midrule
        & & \multicolumn{3}{c|}{\# RGD Iterations} & \multicolumn{2}{c}{Energy values} \\
        \midrule
		$S^{d-1}$ & \# RGD tests & min. & mean & max. & $E_{\log}^{(1)}$ & $E_{\log}^{(2)}$ \\
		\midrule
		$S^2$ & 1.000 & 20 & 2921 & 3073 & -16.3649557 (100\%) & N/A \\
		$S^3$ & 5.000 & 50 & 72.6 & 145 & -17.1587805 (100\%) & N/A \\
		$S^4$ & 10.000 & 60 & 98.0 & 253 & -17.4985786 (84.2\%) & -17.4806735 (15.8\%) \\
		$S^5$ & 50.000 & 18 & 24.1 & 58 & -17.7932551 (100\%) & N/A \\
		\bottomrule
	\end{tabular}
	\end{table}

    Finally, for seven points in $S^k$, with $k \geq 5$, it is known that the problem has a unique global minima, given by the 6-simplex \cite{kolushov1997}. We did 50.000 tests for $k=5$, and the algorithm always converged to a configuration with the energy of the simplex: $\simeq -17.7932551$. This suggests that there are no spurious local minima in this case.

    \subsubsection{Dipole implies conjectured minima in $S^2$}

    We now prove our main result of this section.
    
    \begin{theorem} \label{theorem:dipole_n7}
        For the Fekete problem with $n=7$ points in $S^2$, the only critical configuration having a dipole is the 1:5:1.
    \end{theorem}
    
    To prove our result, we work with the system of critical configurations in Cartesian coordinates, as obtained in Section \ref{sec:critical_confs}, and before taking dot products. This is useful as we want to work in $S^2$, while the dot product formulation gives critical configurations in any sphere dimension. Specifically, we use the following system of equations, with variables $w_i \in \mathbb{R}^3$ and $z_{ij} \in \mathbb{R}$:
    \begin{equation} \label{eq:system_cartesian}
        w_i^T w_i = 1, \ (n-1) w_i - \sum_{j=1, j \neq i}^n \left( w_i - w_j \right) z_{ij} = \vec{0}, \ \forall \ i; \quad z_{ij} \left( 1 - w_i^T w_j \right) = 1, \ \forall \ i < j .
    \end{equation}
    Table \ref{tab:variables_equations_number_cartesian} shows the number of variables and equations of this system for some values of $n$. Note that this system has orthogonal symmetries that we should remove to apply our method.
    
	\begin{table}[H]
    \centering
	\caption{Number of variables and equations of the system given by Equations \eqref{eq:system_cartesian} in $\mathbb{R}^3$.}
	\label{tab:variables_equations_number_cartesian}
	\begin{tabular}{cccccccc}
		\toprule
		 & $n$ & 3 & 4 & 5 & 6 & 7 & 8 \\
		\midrule
		variables & $3n + \frac{n(n-1)}{2}$ & 12 & 18 & 25 & 33 & 42 & 52 \\
		equations & $4n + \frac{n(n-1)}{2}$ & 15 & 22 & 30 & 39 & 49 & 60 \\
		\bottomrule
	\end{tabular}
	\end{table}

    To search for critical configurations having a dipole, we fix $w_7 = (-1,0,0)$ and $w_6 = (1,0,0)$ in the previous system, forming a dipole between $w_6$ and $w_7$. To remove orthogonal symmetries, we fix the last coordinate of another point at zero: $w_5 = (x,y,0)$. This way of removing orthogonal symmetries is an alternative to what we did in Section \ref{sec:removing_orthogonal_symmetry} when taking dot products, with the difference that now we remain in $S^2$.

	\begin{figure}[H]
		\centering
    	\begin{tikzpicture}[
    		scale=0.5,
    		rotate=-90, 
    		x={(1cm,0cm)},
    		y={(0cm,1cm)},
    		z={(-0.4cm,0.4cm)} 
    		]
    		
    		\draw[thick] (4.5,0,0) -- (4,0,0);
    		\draw[thick, dashed] (4,0,0) -- (-5,0,0);
    		\draw[->, thick] (-4,0,0) -- (-5,0,0) node[below right] {$z$};
    		
    		\draw[thick] (0,-5,0) -- (0,-4,0);
    		\draw[thick, dashed] (0,-4,0) -- (0,4,0);
    		\draw[->, thick] (0,4,0) -- (0,5,0) node[below left] {$x$};
    		
    		\draw[->, thick, dashed] (0,0,-3.5) -- (0,0,5) node[below right] {$y$};	
    		\draw[thick] (0,0,-5) -- (0,0,-3.5);

    		\def\R{4}      
    		\def\Eq{1.3}     
    		\def\rp{4}     
    		
    		\fill[red!20, opacity=0.6] (0,0) circle (\R);
    		
    		\draw[thick] 
    		(\R,0) arc[start angle=0, end angle=-180, x radius=\R, y radius=\Eq];
    		
    		\draw[thick, dashed] 
    		(\R,0) arc[start angle=0, end angle=180, x radius=\R, y radius=\Eq];

    		\draw[thick] 
    		(0,\R) arc[start angle=90, end angle=-90, x radius=\Eq, y radius=\R];
    		
    		\draw[thick, dashed] 
    		(0,\R) arc[start angle=90, end angle=270, x radius=\Eq, y radius=\R];

    		\def\aA{72} 
    		\coordinate (W1) at ({\rp*cos(15)}, -0.3);	
    		\coordinate (W2) at ({\rp*cos(\aA+6)}, 1.3);
    		\coordinate (W3) at ({\rp*cos(\aA*2-5)}, 0.9);
    		\coordinate (W4) at ({\rp*cos(\aA*3+10)}, -0.9);
    		\coordinate (W5) at ({\rp*cos(\aA*4)}, -1.25);	
    		
    		\coordinate (W6) at (0, 4);	
    		\coordinate (W7) at (0, -4);

    		\foreach \p in {W1,W2,W3,W4}{
    			\fill (\p) circle (7pt);
    		}
    		
    		\fill[red] (W5) circle (7pt);
    		
    		\fill[blue] (W6) circle (7pt);
    		\fill[blue] (W7) circle (7pt);

    		\draw[blue, dashed, thick]
    		(W1) -- (W2) -- (W3) -- (W4) -- (W5) -- cycle;
    		
    		\node[below left] at (W1) {$w_1$};
    		\node[above right] at (W2) {$w_2$};
    		\node[above right] at (W3) {$w_3$};
    		\node[above left] at (W4) {$w_4$};
    		\node[above left] at (W5) {$w_5$};
    		
    		\node[above right] at (W6) {$w_6$};
    		\node[above left] at (W7) {$w_7$};
    		
    	\end{tikzpicture}
        \caption{Seven points. One way of obtaining configuration 1:5:1 when $w_6$ and $w_7$ are fixed forming a dipole, and $w_5$ is restricted to $z=0$.}
        \label{fig:n7_cartesian_dipole}
	\end{figure}

    Configuration 1:5:1 is a particular solution of the resulting polynomial system. Moreover, when counting permutations, configuration 1:5:1 appears as a different solution exactly 48 times. This is because the point $w_5$ may be chosen in two different ways: $w_5 = (0,1,0)$ or $w_5 = (0,-1,0)$ (see Figure \ref{fig:n7_cartesian_dipole}). In both cases, the points $w_1$ to $w_4$ may be freely permuted in $4! = 24$ ways. This shows that the system has at least 48 different solutions (without counting multiplicities), all of them permutations of configuration 1:5:1.
    
    On the other hand, we may count the exact number of solutions of the system. For this we first find a Gr\"obner basis with \emph{msolve}, and then pass it to \emph{Macaulay2} to calculate the radical of the ideal. Using Theorem \ref{th:finitness}, we conclude that the number of solutions associated to this last ideal is 48. This is an upper bound on the number of different solutions. As this upper bound matches the lower bound, we conclude that the only kind of critical configuration with a dipole is configuration 1:5:1, as we wanted to prove. We note that the number of solutions of the ideal, before taking the radical, is $192 = 48 \times 4$; which means that each of the 48 solutions has multiplicity four.
    Table \ref{tab:complex_solutions_number_dipole_n7} shows the execution time and RAM needed to calculate a Gr\"obner basis with grevlex monomial order and rational coefficients. The resources needed to calculate the radical of the ideal with \emph{M2}, with a precomputed Gr\"obner basis as input, are negligible.

    \begin{table}[H]
        \centering
		\caption{Ideal with a dipole for $n=7$ in $S^2$, using Cartesian coordinates. Time and RAM of \emph{msolve} v0.94 to calculate a Gr\"obner basis of the ideal with grevlex and 40 threads.}
		\label{tab:complex_solutions_number_dipole_n7}
		\begin{tabular}{cc|ccc|ccc}
			\toprule
            & & \multicolumn{2}{c}{Time (s)} & & \multicolumn{3}{c}{Basis} \\
			\# vars. & \# eqs. & Total & Lift to $\mathbb{Q}$ & RAM & \# pols & \# mons & file size \\
			\midrule
			35 & 47 & 4542 & 806 & 27.2 GB & 198 & 2096 & 27 KB \\
			\bottomrule
		\end{tabular}
	\end{table}

\subsection{{Comparison of Formulations in $S^2$}}
    
    In this section we compare the performance of the formulation in Cartesian coordinates, given by Equations \ref{eq:system_cartesian}, with the one in terms of dot products, given by Equations \eqref{eq:center_mass_null_xij}, \eqref{eq:definition_zij} and \eqref{eq:lagrange_gradient_null_xij_pol}.
    For the cartesian formulation, we remove orthogonal symmetry by fixing $w_n=(-1,0,0)$, and $w_{n-1}=(x,y,0)$. We also need to avoid a dipole between these two points, since this would produce an $S^1$ symmetry with respect to the dipole axis. For this we introduce an auxiliary variable $t$, and the condition: $$ t \left( w_{n-2}^{(1)} - 1 \right) = 1 ;$$
    where $w_{n-2}^{(1)}$ denotes the first coordinate of $w_{n-2}$. This implies $w_{n-2}^{(1)} \neq 1$.
    On the other hand, for the dot product formulation, we add equations to limit the search of critical configurations to $S^2 \subset \mathbb{R}^3$. Specifically, we add all $4 \times 4$ minors of the dot product matrix $X$, which is equivalent to the condition: $\text{rank}(X) \leq 3$.
    Tables \ref{tab:comparison_formulation_cartesian} and \ref{tab:comparison_formulation_dot_product} show the performance of each formulation when calculating the leading monomials of a Gr\"obner basis with grevlex monomial order, using \emph{msolve} v.0.94 in 40 threads.

    \begin{table}[H]
        \centering
		\caption{Cartesian formulation in $S^2$. Computation of leading monomials of a Gr\"obner basis.}
		\label{tab:comparison_formulation_cartesian}
		\begin{tabular}{c|ccc|cc|cc|cc}
			\toprule
            & & & & \multicolumn{2}{c}{\emph{msolve}} & \multicolumn{2}{c}{Basis} & \multicolumn{2}{c}{Ideal} \\
			$n$ & \# vars & \# eqs & max deg & time (s) & RAM & \# pols & \# mons & $\dim$ & $\deg$ \\
			\midrule
		      4 & 15 & 22 & 3 & 0.12 & 3.85 MB & 23 & 76 & 0 & 8 \\
			5 & 22 & 30 & 3 & 1.17 & 3.84 MB & 247 & 6609 & 0 & 120 \\
			6 & 30 & 39 & 3 & 3110 & 14.4 GB & 5681 & 3.268.865 & 0 & 2688 \\
			7 & 39 & 49 & 3 & \multicolumn{6}{c}{NA}  \\
			\bottomrule
		\end{tabular}
	\end{table}

    \begin{table}[H]
        \centering
		\caption{Dot product formulation in $S^2$. Computation of leading monomials of a Gr\"obner basis.}
		\label{tab:comparison_formulation_dot_product}
		\begin{tabular}{c|ccc|cc|cc|cc}
			\toprule
            & & & & \multicolumn{2}{c}{\emph{msolve}} & \multicolumn{2}{c}{Basis} & \multicolumn{2}{c}{Ideal} \\
			$n$ & \# vars & \# eqs & max deg & time (s) & RAM & \# pols & \# mons & $\dim$ & $\deg$ \\
			\midrule
		      4 & 12 & 23 & 4 & 0.10 & 3.67 MB & 15 & 57 & 0 & 4 \\
			5 & 20 & 60 & 4 & 0.66 & 3.66 MB & 130 & 4178 & 0 & 37 \\
			6 & 30 & 276 & 4 & 564 & 3.23 GB & 2037 & 1.288.061 & 0 & 717 \\
			7 & 42 & 1295 & 4 & \multicolumn{6}{c}{NA} \\
			\bottomrule
		\end{tabular}
	\end{table}

    As can be seen from the previous results, for up to six points, the dot product formulation uses less space and time resources than the Cartesian formulation, and the number of solutions is almost 3.8 times less than in the Cartesian formulation. Although time and space resources could change with a change in the monomial order, the difference in the number of solutions will remain; a matter that is relevant when analysing the different solutions in subsequent stages.
    
    Also, we note that counting the different permutations of a given ``kind'' of solution is more difficult in the Cartesian formulation. In particular because the same ``kind'' of solution could appear in different permutation orbits.

\section{Conclusions and Future Work}
    
    In this work we characterize the critical configurations of the logarithmic Fekete problem for up to six points in all dimensions.    
    We proceed by expressing the set of critical configurations up to the action of the orthogonal group as a zero dimensional variety. Simultaneously, we construct several candidates for critical configurations. The proof of the classification theorem is achieved by showing that the number of candidates matches the degree of the variety, when the candidates are counted with multiplicity. Our computation of the degree relies on Gr\"obner bases.

    {For seven points, although we are not able to apply our method to obtain all critical configurations, we can apply it to prove that if the Fekete problem in $S^2$ has a solution with a dipole, then it is given by configuration 1:5:1.}
    {Furthermore, our numerical experiments suggest that 1:5:1 is indeed the minimum energy configuration.}

    Our approach is, in principle, applicable to any number of points. However, in practice, there seems to be a computational bottleneck in the construction of the Gr\"obner bases of the ideal of the critical configurations variety. The extension of this method to characterize optimal configurations for the case of seven points is the subject of ongoing work.

\section*{Acknowledgements}

    We thank Pedro Raigorodsky and Carlos Beltr\'an for useful discussions during the completion of this work.
	We thank ClusterUY for the infrastructure for the numerical experiments.
	Mat\'ias Vald\'es acknowledges support from a PhD grant from Agencia Nacional de Investigaci\'on e Innovaci\'on (ANII) and Comisi\'on Acad\'emica de Posgrado (CAP). Leandro Bentancur acknowledges support from a PhD grant from CAP. Marcelo Fiori and Mauricio Velasco were partially supported by ANII grant FCE-1-2023-1-176172.
	We are grateful to the reviewers and handling Program Committee members of ISSAC 2025
	Conference for their constructive feedback, which contributed to an improved revised manuscript. We
	also thank the reviewers and the Editor of the Journal of Symbolic Computation for their positive
	evaluation and helpful remarks.

%
%
%
%
%
%
%
%

\appendix

\section{Solutions of minimal primes for $n=6$} \label{sec:minprimes_sols}

In this appendix we describe the new solutions obtained with the procedure of Section \ref{sec:minprimes_proc}, for the case of six points.

\subsection{Factor $5 x_{45} - 1$ - Complex 1}
	
	We obtain 12 minimal primes, and a new kind of solution, which we call ``Complex 1'', as some of its coordinates are complex.
	$$ X_{C1} = \begin{pmatrix}
	       1 & -1 & -x_{15} & x_{15} & x_{15} & -x_{15} \\
            & 1 & x_{15} & -x_{15} & -x_{15} & x_{15} \\
            & & 1 & x_{45} & x_{45} & x_{25} \\
            &  & & 1 & x_{25} & x_{45} \\
            &  & & & 1 & x_{45} \\
            &  & & & & 1 \\
	\end{pmatrix}, \quad \begin{array}{l} x_{45} = \frac{1}{5}, \\ x_{25} = -\frac{7}{5}, \\ x_{15} = \pm \frac{i}{\sqrt{5}} \end{array} .$$

\subsection{Factor $25 x_{45}^2 + 28 x_{45} + 19$ - Complex 2}
	
    We obtain 3 minimal primes and a new kind of solution, which we call ``Complex 2'', as some of its coordinates are complex.
    $$ X_{C2} = \begin{pmatrix}
            1 & A & x_{13} & B & C & C \\
    		& 1 & B & x_{13} & C & C \\
    		& & 1 & D & x_{35} & x_{35} \\
    		  & & & 1 & x_{35} & x_{35} \\
    		& & & & 1 & x_{45} \\
    		& & & & & 1 \\
    	\end{pmatrix} ; $$
    $$ \left\lbrace \begin{array}{l}
            25 x_{45}^2+28x_{45}+19 = 0, \quad 8 x_{13}^2-5x_{13}x_{45}+x_{13}-x_{45}-3 = 0 , \\
            20 x_{35}^2+10x_{35}x_{45}+10x_{35}-x_{45}-3 = 0, \\
            A := 2 x_{35} + \frac{3}{8} x_{45} + \frac{1}{8}, \quad B := -x_{13} + \frac{5}{8} x_{45} -\frac{1}{8}, \\
            C := -x_{35} - \frac{1}{2} x_{45} - \frac{1}{2}, \quad D := -2 x_{35} - \frac{5}{8} x_{45} - \frac{7}{8} .
        \end{array} \right. .$$
    
\subsection{Factor $3 x_{45} + 1$ - {Real 1}}

    We obtain 6 minimal primes, and a new kind of solution, ``{Real 1}''.
    $$ X_{N1} = \begin{pmatrix}
    	1 & x_{45} & x_{02} & x_{02} & x_{45} & x_{45} \\
    	& 1 & x_{02} & x_{02} & x_{45} & x_{45} \\
    	& & 1 & x_{23} & x_{02} & x_{02} \\
    	& & & 1 & x_{02} & x_{02} \\
    	& & & & 1 & x_{45} \\
    	& & & & & 1 \\
    \end{pmatrix}, \quad \begin{array}{l} x_{45} = -1/3, \\ x_{02} = 0, \\ x_{23} = - 1 \end{array} .$$

\subsection{Factor $5 x_{45} + 4$ - {Real 2}}

We obtain 6 minimal primes, and a new kind of solution, ``{Real 2}''.
$$ X_{N2} = \begin{pmatrix}
	1 & x_{45} & x_{02} & x_{02} & x_{04} & x_{04} \\
	& 1 & x_{02} & x_{02} & x_{04} & x_{04} \\
	& & 1 & x_{02} & x_{02} & x_{02} \\
	& & & 1 & x_{02} & x_{02} \\
	& & & & 1 & x_{45} \\
	& & & & & 1 \\
\end{pmatrix}, \quad \begin{array}{l} x_{45} = -4/5, \\ x_{04} = 1/10, \\ x_{02} = -1/5 \end{array} .$$

\subsection{Factor $25 x_{45} - 1$ - {Real 3}}

We obtain 24 minimal primes, and a new kind of solution, ``{Real 3}''.
$$ X_{N3} = \begin{pmatrix}
	1 & x_{45} & x_{45} & x_{03} & x_{45} & x_{05} \\
	& 1 & x_{12} & x_{03} & x_{12} & x_{45} \\
	& & 1 & x_{03} & x_{12} & x_{45} \\
	& & & 1 & x_{03} & x_{03} \\
	& & & & 1 & x_{45} \\
	& & & & & 1 \\
\end{pmatrix}, \quad \begin{array}{l} x_{45} = 1/25, \\ x_{05} = -23/25, \\ x_{12} = -11/25, \\ x_{03} = -1/5 \end{array} .$$

\subsection{Factor $x_{45}$ - {Real 4}}

We obtain 26 minimal primes, and a new kind of solution, ``{Real 4}''.
$$ X_{N4} = \begin{pmatrix}
	1 & x_{01} & x_{45} & x_{45} & x_{45} & x_{01} \\
	& 1 & x_{45} & x_{45} & x_{45} & x_{01} \\
	& & 1 & x_{01} & x_{01} & x_{45} \\
	& & & 1 & x_{01} & x_{45} \\
	& & & & 1 & x_{45} \\
	& & & & & 1 \\
\end{pmatrix}, \quad \begin{array}{l} x_{45} = 0, \\ x_{01} = -1/2 \end{array} .$$

\section{New real solutions for $n=6$} \label{sec:cartesian_coord_new}

In this appendix we give cartesian coordinates on $S^3$ for the new real solutions of six points, as described in Section \ref{sec:cartesian_coords_sphere}.

$$ W_{\text{R1}} = \frac{1}{3} \begin{pmatrix}
    0 & 0 &  -\sqrt{6} & \sqrt{6} & 0 & 0 \\
    0 & 0 & -\sqrt{2} & -\sqrt{2} & 2\sqrt{2} & 0 \\
    0 & 3 & -1 & -1 & -1 & 0 \\
    3 & 0 & 0 & 0 & 0 & -3
\end{pmatrix} .$$

	$$ W_{\text{R2}} = \frac{1}{10} \begin{pmatrix}
    -3\sqrt{10} & 3\sqrt{10} & 0 & 0 & 0 & 0 \\
    0 & 0 & -3\sqrt{10} & 3\sqrt{10} & 0 & 0 \\
    0 & 0 & 0 & 0 & 2\sqrt{15} & -2\sqrt{15} \\
    \sqrt{10} & \sqrt{10} & \sqrt{10} & \sqrt{10} & -2\sqrt{10} & -2\sqrt{10}
\end{pmatrix}.$$

	$$ W_{\text{R3}} = \frac{1}{5} \begin{pmatrix}
    0 & 0 & -3\sqrt{2} & 3\sqrt{2} & 0 & 0 \\
    0 & 0 & -\sqrt{6} & -\sqrt{6} & 2\sqrt{6} & 0 \\
    0 & 2\sqrt{6} & 0 & 0 & 0 & - 2\sqrt{6} \\
    5 & -1 & -1 & -1 & -1 & -1
\end{pmatrix} .$$

	\begin{equation} \label{eq:cartesian_new4}
		W_{\text{R4}} = \frac{1}{2} \begin{pmatrix}
        -\sqrt{3} & \sqrt{3} & 0 & 0 & 0 & 0 \\
        -1 & -1 & 2 & 0 & 0 & 0 \\
        0 & 0 & 0 & -\sqrt{3} & \sqrt{3} & 0 \\
        0 & 0 & 0 & -1 & -1 & 2
    \end{pmatrix}.
	\end{equation}

Based on the geometrical interpretation of Section \ref{sec: Geometric interpretation on S^3}, and inspired by the F\"oppl notation, we propose the following notation for these configurations:
$$ \begin{array}{ll}
    \text{({Real 1})} \quad 1:{4}_{S^2}:1, & \text{({Real 2})} \quad { 4_{\text{Eq.}} : 2_{S^2} } \\
    \text{({Real 3})} \quad 1:5_{S^2}, & \text{({Real 4})} \quad 3_{S^1} \times 3_{S^1}.
\end{array} $$

\section{Projected Hessian eigenvalues} \label{sec:eigenvalues_hessian}
    
    The eigenvalues of the projected Hessian are given in Tables \ref{tab:hess_eigenvalues_n4}, \ref{tab:hess_eigenvalues_n5} and \ref{tab:hess_eigenvalues_n6}.
    The associated code can be found in the \emph{SymPy} notebook \emph{``classifyHessian''}.
    For 1:5 and 3:3 we could not calculate the exact eigenvalues, and we report the eigenvalues calculated with floating point arithmetic. To prove that these configurations are saddle, we give feasible directions where the quadratic form of the Hessian is negative.
    To find a negative direction for 1:5, we move the first two points of the roots of unity, one upward and the other downward. The associated value of the quadratic form of the Hessian is:
    $$ v^T H_L v = \frac{ -263 \sqrt{5} }{625} + \frac{ 13 \sqrt{5} \sqrt{ 2 \sqrt{5} + 6 } }{625} + \frac{185}{625} \simeq -0.494 < 0 .$$
    To find a negative direction for 3:3, we move the three points of the upper hemisphere in the direction of a counter-clockwise rotation with respect to the z-axis. The associated value is: $$ v^T H_L v = 36 - 15 \sqrt{6} \simeq -0.742 < 0 .$$
    
    \begin{table}[H]
        \centering
        \caption{$n=4$. Eigenvalues of the Hessian of the Lagrangian, projected onto the tangent space of the product of spheres.}
        \label{tab:hess_eigenvalues_n4}
        \begin{tabular}{cccc}
        	$\mathbb{R}^d$ & Conf. & eig.: mult. & $|O(d)|$ \\
        	\midrule
        	$\mathbb{R}^2$ & Equator & 4:1, 3:2, 0:1 & 1 \\
        	\midrule
            $\mathbb{R}^3$ & Equator & 4:1, 3:3, 0:3, -1:1 & 3 \\
            & Tetrahedron & $\frac{3}{2}$:2, 3:3, 0:3 & \\
            \bottomrule
        \end{tabular}
    \end{table}
    
    \begin{table}[H]
        \centering
        \caption{$n=5$. Eigenvalues of the Hessian of the Lagrangian, projected onto the tangent space of the product of spheres.}
        \label{tab:hess_eigenvalues_n5}
        \begin{tabular}{cccc}
        	$\mathbb{R}^d$ & Conf. & eig.: mult. & $|O(d)|$ \\
        	\midrule
        	$\mathbb{R}^2$ & Equator & 6:2, 4:2, 0:1 & 1 \\
        	\midrule
            $\mathbb{R}^3$ & Equator & 6:2, 4:3, 0:3, -2:2 & 3 \\
            & 1:4 & $\frac{64}{15}$:1, $-\frac{4}{15}$:1, 2:2, 4:3, 0:3 & \\
            & 1:3:1 & $\frac{7}{2}$:2, $\frac{1}{2}$:2, 4:3, 0:3 & \\
            \midrule
            $\mathbb{R}^4$ & 1:3:1 & $\frac{7}{2}$:2, $\frac{1}{2}$:2, 4:4, 0:6, -1:1 & 6 \\
            & 4-simplex & 4:4, $\frac{8}{5}$:5, 0:6 & \\
            \bottomrule
        \end{tabular}
    \end{table}

    \begin{table}[H]
        \centering
        \caption{$n=6$. Eigenvalues of the Hessian of the Lagrangian, projected onto the tangent space of the product of spheres. Values with (*) are calculated with floating point arithmetic.}
        \label{tab:hess_eigenvalues_n6}
        \begin{tabular}{cccc}
        	$\mathbb{R}^d$ & Conf. & eig.: mult. & $|O(d)|$ \\
        	\midrule
        	$\mathbb{R}^2$ & Equator & 9:1, 8:2, 5:2, 0:1 & 1 \\
        	\midrule
            $\mathbb{R}^3$ & Equator & 9:1, 8:2, 5:3, 0:3, -4:1, -3:2 & 3 \\
            & 1:5 (*) & 5:3, 0:3, 2.5:2, -1.25:2, 6.25:2 & \\
            & 3:3 (*) & 5:3, 0:3, 3.55:2, 1.45:2, 5.80:1, -0.80:1 & \\
            & 1:4:1 & 5:3, 4:3, 1:3, 0:3 & \\
            \midrule
            $\mathbb{R}^4$ & 1:4:1 & 5:4, 4:3, 1:3, 0:6, -1:2 & 6 \\
            & {Real 1} & $\frac{3}{2}$:2, $\frac{10}{3}$:3, $\frac{1}{3}$:3, 5:4, 0:6 & \\
            & {Real 2} & $\frac{5}{3}$:1, $\frac{40}{9}$:1, $\frac{-5}{18}$:2, 5: 4, $\frac{25}{12}$:4, 0:6 & \\
            & {Real 3} & $\frac{13}{6}$:1, $\frac{-5}{24}$:1, $\frac{11}{6}$:2, $\frac{175}{48}$:2, $\frac{25}{48}$:2, 5:4, 0:6 & \\
            & {Real 4} & 5:4, 3:4, $\frac{1}{2}$:4, 0:6 & \\
            \midrule
            $\mathbb{R}^5$ & {Real 1} & $\frac{3}{2}$:2, $\frac{10}{3}$:3, $\frac{1}{3}$:3, 5:5, 0:10, -1:1 & 10 \\
             & {Real 4} & 5:5, 3:4, $\frac{1}{2}$:4, 0:10, -1:1 &  \\
            & 5-simplex & 5:5, $\frac{5}{3}$:9, 0:10 & \\
            \bottomrule
        \end{tabular}
    \end{table}


\printbibliography

@article{andreev1996,
 author = {Andreev, Nikolay N.},
 title = {An extremal property of the icosahedron},
 fjournal = {East Journal on Approximations},
 journal = {East J. Approx.},
 issn = {1310-6236},
 volume = {2},
 number = {4},
 pages = {459--462},
 year = {1996},
 language = {English},
 keywords = {51M16,51M20,52B99},
 zbMATH = {982578},
 Zbl = {0877.51021}
}

@article{kolushov1997,
 author = {Kolushov, A. V. and Yudin, Vladimir A.},
 title = {Extremal dispositions of the points on the sphere},
 fjournal = {Analysis Mathematica},
 journal = {Anal. Math.},
 issn = {0133-3852},
 volume = {23},
 number = {1},
 pages = {25--34},
 year = {1997},
 doi = {10.1007/BF02789828},
 keywords = {51M10,51K99,52A40,65D05,41A05},
 zbMATH = {1099671},
 Zbl = {0880.51008}
}

@article{dragnev2002,
 author = {Dragnev, Peter D. and Legg, David A. and Townsend, Douglas W.},
 title = {Discrete logarithmic energy on the sphere.},
 fjournal = {Pacific Journal of Mathematics},
 journal = {Pac. J. Math.},
 issn = {1945-5844},
 volume = {207},
 number = {2},
 pages = {345--358},
 year = {2002},
 language = {English},
 doi = {10.2140/pjm.2002.207.345},
 keywords = {52A40,78A30},
 zbMATH = {2101025},
 Zbl = {1051.52007}
}

@incollection{berthomieu2021,
 author = {Berthomieu, J{\'e}r{\'e}my and Eder, Christian and Safey El Din, Mohab},
 title = {msolve. {A} library for solving polynomial systems},
 booktitle = {Proceedings of the 46th international symposium on symbolic and algebraic computation, ISSAC '21, virtual event, Russian Federation, July 18--23, 2021},
 isbn = {978-1-4503-8382-0},
 pages = {51--58},
 year = {2021},
 publisher = {New York, NY: Association for Computing Machinery (ACM)},
 language = {English},
 doi = {10.1145/3452143.3465545},
 keywords = {68W30},
 zbMATH = {7970456}
}

@article{faugere1999,
 author = {Faug{\`e}re, Jean-Charles},
 title = {A new efficient algorithm for computing {Gr{\"o}bner} bases {{\((F_4)\)}}},
 fjournal = {Journal of Pure and Applied Algebra},
 journal = {J. Pure Appl. Algebra},
 issn = {0022-4049},
 volume = {139},
 number = {1-3},
 pages = {61--88},
 year = {1999},
 language = {English},
 doi = {10.1016/S0022-4049(99)00005-5},
 keywords = {68W30,13P10},
 zbMATH = {1308377},
 Zbl = {0930.68174}
}

@book{borodachov2019,
 author = {Borodachov, Sergiy V. and Hardin, Douglas P. and Saff, Edward B.},
 title = {Discrete energy on rectifiable sets},
 fseries = {Springer Monographs in Mathematics},
 series = {Springer Monogr. Math.},
 issn = {1439-7382},
 isbn = {978-0-387-84807-5; 978-0-387-84808-2},
 year = {2019},
 publisher = {New York, NY: Springer},
 language = {English},
 doi = {10.1007/978-0-387-84808-2},
 keywords = {41-02,28-02,37-02,49-02,52-02},
 zbMATH = {7103761},
 Zbl = {1437.41002}
}

@article{shub1993,
 author = {Shub, Michael and Smale, Steve},
 title = {Complexity of {Bezout}'s theorem. {III}: {Condition} number and packing},
 fjournal = {Journal of Complexity},
 journal = {J. Complexity},
 issn = {0885-064X},
 volume = {9},
 number = {1},
 pages = {4--14},
 year = {1993},
 language = {English},
 doi = {10.1006/jcom.1993.1002},
 keywords = {65F35,12Y05,30C15,65Y20,65G99},
 zbMATH = {222873},
 Zbl = {0846.65018}
}

@incollection{rakhmanov1995,
 author = {Rakhmanov, Evguenii A. and Saff, Edward B. and Zhou, Yan. M.},
 title = {Electrons on the sphere},
 booktitle = {Computational methods and function theory 1994. Proceedings of the conference, Penang, Malaysia, March 21--25, 1994},
 isbn = {981-02-2129-0},
 pages = {293--309},
 year = {1995},
 publisher = {Singapore: World Scientific},
 language = {English},
 keywords = {65K05,90C90,78A35,81V45},
 zbMATH = {1023362},
 Zbl = {0870.65049}
}

@article{beltran2013,
 author = {Beltr{\'a}n, Carlos},
 title = {Harmonic properties of the logarithmic potential and the computability of elliptic {Fekete} points},
 fjournal = {Constructive Approximation},
 journal = {Constr. Approx.},
 issn = {0176-4276},
 volume = {37},
 number = {1},
 pages = {135--165},
 year = {2013},
 language = {English},
 doi = {10.1007/s00365-012-9158-y},
 keywords = {31B05,68Q25},
 zbMATH = {6141007},
 Zbl = {1266.31006}
}

@book{cox2005,
 author = {Cox, David A. and Little, John and O'Shea, Donal},
 title = {Using algebraic geometry},
 edition = {2nd ed.},
 fseries = {Graduate Texts in Mathematics},
 series = {Grad. Texts Math.},
 issn = {0072-5285},
 volume = {185},
 isbn = {0-387-20706-6; 0-387-20733-3},
 year = {2005},
 publisher = {New York, NY: Springer},
 language = {English},
 doi = {10.1007/b138611},
 keywords = {13Pxx,13P10,14-01,13-01,14Q20,13-02},
 zbMATH = {2163714},
 Zbl = {1079.13017}
}

@article{constantineau2023,
 author = {Constantineau, Kevin and Garc{\'{\i}}a-Azpeitia, Carlos and Garc{\'{\i}}a-Naranjo, Luis C. and Lessard, Jean-Philippe},
 title = {Determination of stable branches of relative equilibria of the {{\(N\)}}-vortex problem on the sphere},
 fjournal = {Communications in Mathematical Physics},
 journal = {Commun. Math. Phys.},
 issn = {0010-3616},
 volume = {406},
 number = {2},
 pages = {62},
 note = {Id/No 47},
 year = {2025},
 language = {English},
 doi = {10.1007/s00220-024-05220-2},
 keywords = {76B47,35Q31,68V05},
 zbMATH = {7977313},
 Zbl = {1566.76027}
}

@article{ballinger2009,
 author = {Ballinger, Brandon and Blekherman, Grigoriy and Cohn, Henry and Giansiracusa, Noah and Kelly, Elizabeth and Sch{\"u}rmann, Achill},
 title = {Experimental study of energy-minimizing point configurations on spheres},
 fjournal = {Experimental Mathematics},
 journal = {Exp. Math.},
 issn = {1058-6458},
 volume = {18},
 number = {3},
 pages = {257--283},
 year = {2009},
 language = {English},
 doi = {10.1080/10586458.2009.10129052},
 keywords = {68U05},
 zbMATH = {5665004},
 Zbl = {1185.68771}
}

@article{foppl1912,
 author = {F{\"o}ppl, L.},
 title = {Stabile {Anordnungen} von {Elektronen} im {Atom}.},
 fjournal = {Journal f{\"u}r die Reine und Angewandte Mathematik},
 journal = {J. Reine Angew. Math.},
 issn = {0075-4102},
 volume = {141},
 pages = {251--302},
 year = {1912},
 language = {German},
 doi = {10.1515/crll.1912.141.251},
 url = {https://eudml.org/doc/149380},
 zbMATH = {2627752},
 JFM = {43.1000.01}
}

@article{ABS,
 author = {Armentano, Diego and Beltr{\'a}n, Carlos and Shub, Michael},
 title = {Minimizing the discrete logarithmic energy on the sphere: the role of random polynomials},
 fjournal = {Transactions of the American Mathematical Society},
 journal = {Trans. Am. Math. Soc.},
 issn = {0002-9947},
 volume = {363},
 number = {6},
 pages = {2955--2965},
 year = {2011},
 language = {English},
 doi = {10.1090/S0002-9947-2011-05243-8},
 keywords = {31C20,52A40,60J45},
 zbMATH = {5903201},
 Zbl = {1223.31003}
}

@article{alishahi2015spherical,
 author = {Alishahi, Kasra and Zamani, Mohammadsadegh},
 title = {The spherical ensemble and uniform distribution of points on the sphere},
 fjournal = {Electronic Journal of Probability},
 journal = {Electron. J. Probab.},
 issn = {1083-6489},
 volume = {20},
 pages = {27},
 note = {Id/No 23},
 year = {2015},
 language = {English},
 doi = {10.1214/EJP.v20-3733},
 keywords = {60B20,15B52,11K38,52A40},
 zbMATH = {6471501},
 Zbl = {1327.60022}
}

@article{beltran2018diamond,
 author = {Beltr{\'a}n, Carlos and Etayo, Uju{\'e}},
 title = {The {Diamond} ensemble: a constructive set of spherical points with small logarithmic energy},
 fjournal = {Journal of Complexity},
 journal = {J. Complexity},
 issn = {0885-064X},
 volume = {59},
 pages = {21},
 note = {Id/No 101471},
 year = {2020},
 language = {English},
 doi = {10.1016/j.jco.2020.101471},
 keywords = {31B15},
 url = {hdl.handle.net/10902/20784},
 zbMATH = {7213221},
 Zbl = {1443.31004}
}

@incollection{smale_problems,
 author = {Smale, Steve},
 title = {Mathematical problems for the next century},
 booktitle = {Mathematics: frontiers and perspectives},
 isbn = {0-8218-2070-2},
 pages = {271--294},
 year = {2000},
 publisher = {Providence, RI: American Mathematical Society (AMS)},
 language = {English},
 keywords = {00A07,01A67},
 zbMATH = {1503621},
 Zbl = {1031.00005}
}

@article{carlos_fatima,
 author = {Beltr{\'a}n, Carlos and Lizarte, F{\'a}tima},
 title = {A lower bound for the logarithmic energy on {{\(\mathbb{S}^2\)}} and for the {Green} energy on {{\(\mathbb{S}^n\)}}},
 fjournal = {Constructive Approximation},
 journal = {Constr. Approx.},
 issn = {0176-4276},
 volume = {58},
 number = {3},
 pages = {565--587},
 year = {2023},
 language = {English},
 doi = {10.1007/s00365-023-09642-4},
 keywords = {31C12},
 zbMATH = {7781585},
 Zbl = {1529.31004}
}

@article{betermin2018renormalized,
 author = {B{\'e}termin, Laurent and Sandier, Etienne},
 title = {Renormalized energy and asymptotic expansion of optimal logarithmic energy on the sphere},
 fjournal = {Constructive Approximation},
 journal = {Constr. Approx.},
 issn = {0176-4276},
 volume = {47},
 number = {1},
 pages = {39--74},
 year = {2018},
 language = {English},
 doi = {10.1007/s00365-016-9357-z},
 keywords = {82B05,41A60,82B21,31C20,82B20},
 zbMATH = {6844587},
 Zbl = {1391.82002}
}

@incollection{Dragnev2016,
 author = {Dragnev, Peter D.},
 title = {Log-optimal configurations on the sphere},
 booktitle = {Modern trends in constructive function theory. Constructive functions 2014 conference in honor of Ed Saff's 70th birthday, Vanderbilt University, Nashville, TN, USA, May 26--30, 2014. Proceedings},
 isbn = {978-1-4704-2534-0; 978-1-4704-2934-8},
 pages = {41--55},
 year = {2016},
 publisher = {Providence, RI: American Mathematical Society (AMS)},
 language = {English},
 doi = {10.1090/conm/661/13273},
 keywords = {74G05,74G65,31B15,31C15},
 zbMATH = {6622657},
 Zbl = {1348.74119}
}

@article{Dragnev2023,
 author = {Dragnev, Peter D. and Musin, Oleg R.},
 title = {Log-optimal {{\((d+2)\)}}-configurations in {{\(d\)}}-dimensions},
 fjournal = {Transactions of the American Mathematical Society. Series B},
 journal = {Trans. Am. Math. Soc., Ser. B},
 issn = {2330-0000},
 volume = {10},
 pages = {155--170},
 year = {2023},
 language = {English},
 doi = {10.1090/btran/118},
 keywords = {31B15,31C15},
 zbMATH = {7650588},
 Zbl = {1509.31011}
}

@book{horn2013,
 author = {Horn, Roger A. and Johnson, Charles R.},
 title = {Matrix analysis.},
 edition = {2nd ed.},
 isbn = {978-0-521-54823-6; 978-0-521-83940-2},
 year = {2013},
 publisher = {Cambridge: Cambridge University Press},
 language = {English},
 keywords = {15-01,15A18,15A21,15A23,15A42,15A45,15A60,15B48,15B57,15A63},
 zbMATH = {6125590},
 Zbl = {1267.15001}
}

@article{faugere_1993,
 author = {Faug{\`e}re, Jean-Charles and Gianni, Patrizia and Lazard, Daniel and Mora, Teo},
 title = {Efficient computation of zero-dimensional {Gr{\"o}bner} bases by change of ordering},
 fjournal = {Journal of Symbolic Computation},
 journal = {J. Symb. Comput.},
 issn = {0747-7171},
 volume = {16},
 number = {4},
 pages = {329--344},
 year = {1993},
 language = {English},
 doi = {10.1006/jsco.1993.1051},
 keywords = {13P10,68W30},
 zbMATH = {541172},
 Zbl = {0805.13007}
}

@Misc{M2,
    author = {Grayson, Daniel R. and Stillman, Michael E.},
    title = {Macaulay2, a software system for research in algebraic geometry},
    howpublished = {Available at \url{http://www2.macaulay2.com}}
}

@article{pymanopt_2016,
    author = {James Townsend and Niklas Koep and Sebastian Weichwald},
    journal = {Journal of Machine Learning Research},
    number = {137},
    pages = {1–5},
    title = {Pymanopt: A Python Toolbox for Optimization on Manifolds using Automatic Differentiation},
    url = {http://jmlr.org/papers/v17/16-177.html},
    volume = {17},
    year = {2016}
}

@Book{boumal_2023,
  title     = {An introduction to optimization on smooth manifolds},
  author    = {Boumal, Nicolas},
  publisher = {Cambridge University Press},
  year      = {2023},
  url       = {https://www.nicolasboumal.net/book},
  doi       = {10.1017/9781009166164}
}

@article{beltran2020,
  title={Sobre el problema n{\'u}mero 7 de Smale},
  author={Beltr{\'a}n, Carlos},
  journal={La Gaceta de la Real Sociedad Matem{\'a}tica Espa{\~n}ola},
  year={2020},
  url={http://hdl.handle.net/10902/20954},
  volume={23},
  issn={1138-8927}
}

@inproceedings{jin2017,
  title={How to escape saddle points efficiently},
  author={Jin, Chi and Ge, Rong and Netrapalli, Praneeth and Kakade, Sham M. and Jordan, Michael I},
  booktitle={International conference on machine learning},
  pages={1724--1732},
  year={2017},
  organization={PMLR}
}

@inproceedings{ge2015,
  title={Escaping from saddle points—online stochastic gradient for tensor decomposition},
  author={Ge, Rong and Huang, Furong and Jin, Chi and Yuan, Yang},
  booktitle={Conference on learning theory},
  pages={797--842},
  year={2015},
  organization={PMLR}
}

@book{hartshorne2013algebraic,
  title={Algebraic geometry},
  author={Hartshorne, Robin},
  volume={52},
  year={2013},
  publisher={Springer Science \& Business Media}
}

\end{document}